\newtheorem{theorem}{Theorem}[section]
\newtheorem{maintheorem}{Theorem}
\newtheorem{coro}{Corollary}
\newtheorem{prop}[theorem]{Proposition}
\newtheorem{lemma}[theorem]{Lemma}
\theoremstyle{definition}
\theoremstyle{definition}
\theoremstyle{remark}
\newtheorem{remark}[theorem]{Remark}
\theoremstyle{definition}
\newcommand{\R}{\mathbb{R}}
\newcommand{\RP}{\mathbb{R}\textrm{P}}
\newcommand{\ft}{\mathfrak{t}}
\newcommand{\T}{\mathbb{T}}
\newcommand{\tQ}{\widetilde{Q}}
\newcommand{\C}{\mathbb{C}}
\newcommand{\Z}{\mathbb{Z}}
\begin{document}

\title{Toric Integrable Geodesic Flows in Odd Dimensions}
\author{Christopher R. Lee}
\address{Department of Mathematics, University of Portland, 5000 N. Willamette Blvd., Portland, OR 97203}
\email{leec@up.edu}
\author{Susan Tolman}
\address{Department of Mathematics, University of Illinois, 1409 W. Green St., Urbana, IL 61801}
\email{stolman@math.uiuc.edu}
\date{\today}

\begin{abstract}
Let $Q$ be a compact, connected $n$-dimensional Riemannian manifold,
and assume that the geodesic flow  is toric integrable.
If $n \neq 3$ is odd, or if $\pi_1(Q)$ is infinite, we show that the cosphere
bundle 
of $Q$ is equivariantly contactomorphic to the cosphere bundle of the torus $\T^n$.
As a consequence, $Q$ is homeomorphic to $\T^n$.
\end{abstract} 

\maketitle
\section{Introduction}

Let $Q$ be an $n$-dimensional compact Riemannian manifold. 
On the punctured cotangent bundle, 
$T^*Q\smallsetminus{Q}$,
consider the function
\begin{displaymath}
h(q,p) = \sqrt{g^*_{q}(p,p)},
\end{displaymath}
where $g^*$ denotes the dual of the Riemannian metric $g$. 
(Here, $(q,p)$ are local coordinates on $T^*Q$,  with $q \in Q$ and $p \in T_{q}^*Q$.) 
The {\bf geodesic flow} on $T^*Q\smallsetminus{Q}$ is  the flow of the Hamiltonian vector field of 
$h$ with 
respect to the 
standard symplectic form $\Omega = \sum dq_i \wedge dp_{i}$.
The geodesic flow is {\bf toric integrable} if there is an effective  
action of the torus $\T^n = \R^n/\Z^n$ on $T^*Q\smallsetminus{Q}$ that commutes with dilations 
$(q,p) \mapsto (q,e^{t}p)$, preserves the symplectic form $\Omega$, and
commutes with the geodesic flow 
(or, equivalently, preserves $h$). 
For example,
the flat metric on the torus $\T^n$ and the
round metrics on the sphere $S^2$, the projective plane
$\RP^2$, and the lens spaces $S^3/\Z_\ell$ are 
all
toric integrable.
The $\T^n$ action on $T^* \T^n \smallsetminus \T^n$ is 
simply the lift of the natural action on $\T^n$.
In the remaining cases,
the geodesic flow induces a circle action on $T^*Q \smallsetminus Q$
which commutes with the lift of 
the natural $\T^{n-1}$ action on $Q$.

In \cite{tz}, Toth and Zelditch employ toric integrable geodesic flows to examine connections between the 
dynamics of the geodesic flow and 
the eigenfunctions of the Laplace operator. 
Motivated by this work,
Lerman and Shirokova  prove a conjecture of Toth and Zelditch:
every toric integrable metric on a torus is flat \cite{ls}.
(In contrast, 
not every toric integrable metric on $S^2$ is round \cite{cdv}.)
In \cite{l2}, Lerman extends this work to show that the only toric integrable actions on
the punctured cotangent  bundle of the $n$-torus and the $2$-sphere are the standard actions.
In this paper, he poses the following question: are the examples listed above
the only manifolds that admit toric integrable geodesic flows?
The first author partially answered this question  
by proving that  if $Q$ is a compact $3$-dimensional manifold which admits
a toric integrable geodesic flow, then the cosphere bundle $S(T^*Q)$ is either
equivariantly contactomorphic\footnote{\label{Chris}
Theorem 1.1 in \cite{cl} only explicitly states that these manifolds
are diffeomorphic, but clearly proves this stronger claim.} to $\T^3 \times S^2$, 
or is homotopy equivalent to $(S^3/\Z_\ell) \times S^2$, where $\ell \geq 1$ 
\cite[Theorem 1.1]{cl}.
The main goal of this paper is to determine which odd-dimensional compact manifolds
admit toric integrable geodesic flows.

Our approach to toric integrable geodesic flows will be from the perspective of contact geometry.
A {\bf contact form} 
on a $(2n-1)$-dimensional manifold $M$ is a one-form $\alpha$ such that $\alpha \wedge (d\alpha)^{n-1}$ 
is nowhere zero. A (co-orientable) {\bf contact structure} on $M$ is a codimension one subbundle $\xi$ of $TM$
such that $\xi= \ker{\alpha}$ for some contact form $\alpha$. 
A {\bf contact toric manifold} is a $(2n-1)$-dimensional manifold $M$, 
a contact structure $\xi$  on $M$,
and an effective action of the $n$-torus that preserves $\xi$.

For any Riemannian manifold $Q$, the 
restriction of the  Liouville one-form $\alpha = \sum p_i dq_i$ 
to the cosphere bundle $S(T^*Q)$ is a contact form. 
If $Q$ admits a toric 
integrable geodesic flow, then the $\T^n$-action on $T^*Q\smallsetminus{Q}$ preserves both 
$S(T^*Q)$ and $\alpha$; hence,  its cosphere bundle is a contact toric 
manifold. 
By exploiting the classification of 
contact toric manifolds due to Lerman, we are able to 
show that  many contact toric manifolds  cannot arise 
as cosphere bundles -- they have the wrong 
cohomology.

\begin{maintheorem}\label{mainresult}
Let $Q$ be a compact, connected 
$n$-dimensional Riemannian manifold, and assume that the geodesic flow is toric integrable.
If $n \neq 3$ is odd, or if $\pi_1(Q)$ is infinite,
then the cosphere bundle of $Q$ is equivariantly contactomorphic 
to $\T^n \times S^{n-1}$, the cosphere bundle of $\T^n$.
\end{maintheorem}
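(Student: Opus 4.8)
The plan is to use the classification of contact toric manifolds due to Lerman as the starting point, since the excerpt tells us the cosphere bundle $S(T^*Q)$ is automatically a contact toric manifold of dimension $2n-1$ with an effective $\T^n$ action. I would first recall that Lerman's classification divides contact toric manifolds into a small number of families depending on the rank and the torus dimension. In the ``maximal symmetry'' case where the torus has dimension $n$ acting on a $(2n-1)$-manifold, the classification is especially rigid: the manifold is determined up to equivariant contactomorphism by combinatorial data (a choice of moment-map-like image, or equivalently a toric contact datum). The first real step is therefore to enumerate exactly which contact toric manifolds can occur as $S(T^*Q)$ and to extract the finite list of candidate total spaces $M$.

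Next I would exploit the fact that $M = S(T^*Q)$ is not an arbitrary contact toric manifold but a cosphere bundle, so it fibers over $Q$ with fiber $S^{n-1}$. This constrains the cohomology of $M$ drastically: via the Gysin sequence (or the Leray--Serre spectral sequence of the sphere bundle $S^{n-1} \hookrightarrow M \to Q$) the cohomology of $M$ is built from $H^*(Q)$ and the Euler class of the bundle. The strategy is to compute $H^*(M)$ for each candidate $M$ coming from Lerman's list and compare it against what the Gysin sequence forces. The phrase in the introduction --- ``many contact toric manifolds cannot arise as cosphere bundles; they have the wrong cohomology'' --- signals that this cohomological comparison is exactly the discriminating tool. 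I expect that all candidates except the standard one, $\T^n \times S^{n-1}$, will be eliminated because their cohomology is incompatible with being an $S^{n-1}$-bundle over a compact $n$-manifold. The hypotheses $n$ odd and $n \neq 3$ (or $\pi_1(Q)$ infinite) should enter precisely here: for $n$ odd the Euler class of the fiber bundle behaves differently, and the low-dimensional coincidence at $n=3$ (where the exotic lens-space examples survive) must be excluded by hand.

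Once the candidate $M$ is pinned down to the standard model $\T^n \times S^{n-1}$, the remaining task is to upgrade a diffeomorphism to an equivariant contactomorphism. Here I would again appeal to the rigidity in Lerman's classification: two contact toric manifolds (in this top-dimensional torus case) with the same combinatorial invariant are equivariantly contactomorphic, so it suffices to check that the invariant of $S(T^*Q)$ matches that of the cosphere bundle of the flat torus. This is where I would identify the relevant toric datum of $S(T^*Q)$ and verify it agrees with the standard one; the computation should be forced once we know the underlying manifold and the torus action are the standard ones.

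I expect the main obstacle to be the cohomological elimination step, specifically organizing the case analysis over Lerman's families and handling the borderline behavior that makes $n=3$ exceptional. The delicate point is that the Gysin/Euler-class argument must be run carefully enough to distinguish $\T^n \times S^{n-1}$ from the other toric candidates in \emph{every} odd dimension $n \geq 5$, while transparently explaining why the argument fails to rule out the lens-space-type examples when $n=3$ --- this is presumably why the theorem carves out $n = 3$ and why the alternative hypothesis $\pi_1(Q)$ infinite is offered as a substitute (an infinite fundamental group being incompatible with the finite-$\pi_1$ competitors such as lens spaces). Making this dichotomy precise, rather than the routine spectral-sequence bookkeeping, will be the crux.
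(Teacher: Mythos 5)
Your outline is on target for only one of the two branches of the argument. On the infinite-$\pi_1$ branch it essentially matches the paper: for $n>3$, Lerman's classification (Theorem~\ref{contactthm}(i)) shows directly that $S(T^*Q)$ is \emph{equivariantly contactomorphic} to $\T^k\times S^{2n-k-1}$ for some $0<k\leq n$ --- so your final ``upgrade a diffeomorphism to an equivariant contactomorphism by matching toric data'' step is not a separate step at all; the uniqueness is already packaged in the classification --- and the Gysin/Betti comparison you describe is exactly how $k<n$ is excluded (via the McCord--Meyer--Offin constraints of Theorem~\ref{cosphereBetti}: $\beta_{n-1}(M)\geq\beta_1(M)$ when $Q$ is orientable, torsion in $H^n(M;\Z)$ when it is not, as in Proposition~\ref{productcosphere}). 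You do omit that the classification only applies in dimension $2n-1>5$, so $n=2$ needs the separate classification of contact toric $3$-manifolds together with \cite[Theorem 1.3]{l2}, and $n=3$ is quoted from \cite{cl}; but these are citations, not ideas.

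The genuine gap is the finite-$\pi_1$ case, which is precisely the case the hypothesis ``$n\neq 3$ odd'' exists to handle, and your proposal misreads its structure. Lerman's classification does \emph{not} produce a finite list of candidate total spaces there: when $\pi_1(M)$ is finite, $M$ is a principal circle bundle with curvature $\omega$ over an \emph{arbitrary} compact symplectic toric orbifold $(N,\omega)$ --- an infinite family that cannot be eliminated by computing cohomology ``for each candidate.'' Ruling it out requires structural input: Danilov's theorem that $H^*(N;\R)$ is generated in degree $2$ (so odd cohomology vanishes) and the Hard Lefschetz theorem (injectivity of $\wedge[\omega]$ below middle degree), fed into the Gysin sequence of $S^1\to M\to N$ (Proposition~\ref{ReebtypeBetti}); combined with Poincar\'e duality for the cosphere bundle this forces, for $n$ odd and $Q$ orientable, $\beta_2(N)=1$ but $\beta_{n-1}(N)=2$, contradicting degree-$2$ generation (Proposition~\ref{Reebtypecosphere}). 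Note that odd $n$ enters through the duality symmetry $\beta_i(M)=\beta_{n-i}(M)$, not through the Euler class ``behaving differently'' as you guessed. Moreover the contradiction needs $Q$ orientable, which you cannot assume; the paper's fix is to pass to the universal cover $\tQ$ --- orientable because simply connected, and a \emph{finite} cover because $\pi_1(Q)$ is finite, so the torus action and contact structure lift (Lemma~\ref{coveringcontact}) and the classification applies to $S\big(T^*\tQ\big)$. Without the covering trick and the lifting lemma, your argument has no way to reach a contradiction for nonorientable $Q$, and without the Danilov/Hard Lefschetz input it has no way to handle the Boothby--Wang family at all.
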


\begin{proof}
To begin, we may assume that $n \neq 3$ because otherwise
the claim follows immediately from  \cite[Theorem 1.1]{cl}; 
(see footnote \ref{Chris}).
Similarly, we may assume that $n >  1$.

Since the geodesic flow on $Q$ is toric integrable, the cosphere bundle $S(T^*Q)$ is a 
$(2n -1)$-dimensional contact toric manifold. 

Assume first that $\pi_1(Q)$ is infinite.
By the homotopy long exact sequence for the bundle $S^{n-1} \to S(T^*Q) \to Q$,
this implies that $\pi_1(S(T^*(Q))  = \pi_1(Q)$ is infinite as well.
Hence, if $n > 3$, then
by the classification of contact toric manifolds described
in Theorem~\ref{contactthm},
$S(T^*{Q})$ is 
equivariantly contactomorphic  to $\T^k \times S^{2n-k-1}$ for some $0<k\leq{n}$. 
Finally, by
Proposition \ref{productcosphere}, 
this is impossible unless $k = n$.
Similarly, if $n = 2$, then by the classification of contact toric $3$-manifolds described
in parts (i) and (ii) of  \cite[Theorem 2.18]{l2},
$S(T^*Q)$ is homeomorphic to either $\T^3$ or $S^1 \times S^2$.
By Proposition \ref{productcosphere}  and  \ref{Qisatorus}, this is only possible if 
$Q$ is homeomorphic to $\T^2$. 
Hence, the claim follows immediately from 
\cite[Theorem 1.3]{l2}.

So assume instead that $\pi_1(Q)$ is 
finite and that $n > 3$ is odd.
If $\tQ$ is the 
universal 
cover of $Q$, 
then $\pi_1\big(\tQ\big)$ and $\pi_1\big(S\big(T^*\tQ\big) \big)$ are
trivial.
Moreover,  $S\big(T^*{\tQ}\big)$ is a 
finite-sheeted 
cover of $S(T^*Q)$,
and so  Lemma \ref{coveringcontact} implies that
$S\big(T^*\tQ\big)$ is a contact toric manifold. 
Therefore, by Theorem~\ref{contactthm}, 
$S\big(T^*{\tQ}\big)$  
can be given the structure of a principal circle bundle 
with curvature $\omega$ over a symplectic toric orbifold  $(N,\omega)$.
Since $\tQ$ is oriented,
this contradicts Proposition \ref{Reebtypecosphere}.
\end{proof}

\begin{coro}\label{maincor}
Let $Q$ be a compact, connected  $n$-dimensional Riemannian manifold,  
and assume that the geodesic flow is toric integrable.
If $n \neq 3$ is odd,  or if $\pi_1(Q)$ is infinite, then
$Q$ is homeomorphic to  $\T^n$.
If $n \leq 3$ and $\pi_1(Q)$ is finite, then $Q$ is either diffeomorphic
to the sphere $S^2$, the projective plane $\R P^2$, or the 
lens space $S^3/\Z_\ell$ for some $\ell \geq 1$.
\end{coro}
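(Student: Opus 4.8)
The plan is to feed the cosphere-bundle identification supplied by Theorem~\ref{mainresult} back into the sphere-bundle fibration $S^{n-1}\to S(T^*Q)\to Q$, and to treat the low-dimensional finite-$\pi_1$ cases separately. Concretely, the statement splits exactly along the hypothesis of the Main Theorem: whenever $n\neq 3$ is odd or $\pi_1(Q)$ is infinite, I have $S(T^*Q)\cong \T^n\times S^{n-1}$ and I must deduce $Q\cong\T^n$; in the complementary range $n\le 3$ with $\pi_1(Q)$ finite, the manifold $Q$ is low-dimensional and I can appeal to classification.

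For the first claim, I would argue as follows. For $n\ge 3$ the fiber $S^{n-1}$ is simply connected, so the homotopy exact sequence of $S^{n-1}\to S(T^*Q)\to Q$ gives $\pi_1(Q)\cong\pi_1\big(S(T^*Q)\big)\cong\pi_1(\T^n\times S^{n-1})\cong\Z^n$. Passing to the universal cover $\tQ$, the associated $\Z^n$-cover of $S(T^*Q)=\T^n\times S^{n-1}$ is $\R^n\times S^{n-1}$, which is the cosphere bundle $S(T^*\tQ)$ and is homotopy equivalent to $S^{n-1}$. Running the homotopy exact sequence of $S^{n-1}\to S(T^*\tQ)\to\tQ$ and using that the fiber inclusion $S^{n-1}\hookrightarrow S(T^*\tQ)\simeq S^{n-1}$ is a homotopy equivalence, I get $\pi_k(\tQ)=0$ for all $k$, so $\tQ$ is contractible and $Q$ is aspherical; hence $Q$ is a $K(\Z^n,1)$ and is homotopy equivalent to $\T^n$. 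Finally, the topological rigidity of the torus --- a closed manifold homotopy equivalent to $\T^n$ is homeomorphic to $\T^n$ --- upgrades this to a homeomorphism $Q\cong\T^n$. This is precisely the content I would isolate as Proposition~\ref{Qisatorus}; for $n=2$ the same conclusion is reached inside the proof of Theorem~\ref{mainresult} via the classification in \cite{l2}.

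For the second claim I would dispose of the cases $n\le 3$ with $\pi_1(Q)$ finite by dimension. When $n=2$ there is nothing to prove beyond surface topology: a closed connected surface with finite fundamental group is diffeomorphic either to $S^2$ or to $\RP^2$. When $n=3$ I invoke \cite[Theorem 1.1]{cl}, which shows that the cosphere bundle is homotopy equivalent to $(S^3/\Z_\ell)\times S^2$; reading off the base of the sphere-bundle fibration then identifies $Q$ with the lens space $S^3/\Z_\ell$, the diffeomorphism type being pinned down by the spherical space-form geometry forced by the toric integrable structure. (The case $n=1$ is vacuous, since every closed connected $1$-manifold has infinite fundamental group.)

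I expect the genuine obstacle to be the passage from cosphere-bundle data to a statement about $Q$ itself. Computing $\pi_1(Q)$ is routine, but showing $\tQ$ is contractible requires knowing that the fiber inclusion, rather than merely the total space, carries the homotopy type of $S^{n-1}$; and promoting the resulting homotopy equivalence $Q\simeq\T^n$ to a homeomorphism rests on the topological rigidity (Borel conjecture) for tori, a nontrivial external input proved by Hsiang--Shaneson and Wall in high dimensions and by geometrization in dimension three. The three-dimensional lens-space case is correspondingly the most delicate, since homotopy equivalence does not determine the diffeomorphism type of a lens space, and one must use the extra rigidity coming from the toric geodesic flow, as in \cite{cl}.
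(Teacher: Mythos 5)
Your skeleton matches the paper's: Theorem~\ref{mainresult} plus a torus-recognition step (the paper's Proposition~\ref{Qisatorus}) for the first claim, and \cite{cl} plus low-dimensional topology for the second. But the key step of the torus-recognition argument has a genuine gap. You deduce $\pi_k(\tQ)=0$ ``using that the fiber inclusion $S^{n-1}\hookrightarrow S(T^*\tQ)\simeq S^{n-1}$ is a homotopy equivalence,'' and you never establish this: the identification $S(T^*\tQ)\cong \R^n\times S^{n-1}$ is a homeomorphism of covering spaces and need not respect the bundle projection to $\tQ$, so there is no a priori identification of the fiber with the $S^{n-1}$ factor, nor any control on the degree of the composite $S^{n-1}\to S^{n-1}$. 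You flag this yourself as ``the genuine obstacle,'' but flagging it is not filling it. The paper closes the gap cohomologically, not by analyzing the fiber inclusion: since $S(T^*\tQ)$ is noncompact with compact fiber, $\tQ$ is noncompact, hence $H^n(\tQ;\Z)=0$ and the Euler class of $S^{n-1}\to S(T^*\tQ)\to\tQ$ vanishes; Lemma~\ref{Gysin} together with $H^*(\R^n\times S^{n-1};\Z)$ then forces $H^i(\tQ;\Z)=0$ for all $i>0$, and Hurewicz gives $\pi_i(\tQ)=0$. The statement you wanted about the fiber inclusion is a \emph{consequence} of this computation, not an available input. (Your rigidity citations --- Hsiang--Wall for $n\geq 5$, Freedman for $n=4$, geometrization/Waldhausen for $n=3$ --- agree with the paper's.)

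The $n=3$, finite-$\pi_1$ case is also argued by the wrong mechanism. One cannot ``read off the base'' from a homotopy equivalence $S(T^*Q)\simeq (S^3/\Z_\ell)\times S^2$ of total spaces; all that is extractable is $\pi_1(S(T^*Q))=\Z_\ell$, whence $\pi_1(Q)=\Z_\ell$ via the fibration sequence --- which is exactly what the paper's Proposition~\ref{Qislens} uses. Moreover, the diffeomorphism type is not ``pinned down by the spherical space-form geometry forced by the toric integrable structure'': toric integrability enters only through the computation of $\pi_1$ in \cite{cl}. The rigidity comes from Thurston's elliptization conjecture, proved by Perelman --- a closed $3$-manifold with finite fundamental group is a quotient of $S^3$ by a finite subgroup of $SO(4)$ acting freely, and with cyclic fundamental group such a quotient is a lens space. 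This also dissolves your worry that homotopy equivalence fails to determine lens spaces: the paper never argues through a homotopy equivalence of $Q$ with a lens space, and the notation $S^3/\Z_\ell$ in the statement carries exactly the ambiguity that elliptization delivers. Your $n=2$ and $n=1$ remarks are fine and match the paper.
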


\begin{proof}
If $n \neq 3$ is odd, or if
$\pi_1(Q)$ is infinite, then
Theorem \ref{mainresult} implies that the  cosphere bundle of $Q$ is homeomorphic to $\T^n \times S^{n-1}$. 
By Proposition \ref{Qisatorus}, this is only possible if 
$Q$ is homeomorphic to $\T^n$. 

So assume that 
$\pi_1(Q)$ is finite.
If $n = 3$, then by
Theorem 1.1 in \cite{cl}, 
the fundamental group of
the cosphere bundle $S(T^*Q)$ is  the cyclic group $\Z_\ell$, for some $\ell \geq 1$. 
By Proposition~\ref{Qislens},
this implies that $Q$ is a lens space.
If $n = 2$, the claim is obvious.
\end{proof}

\begin{remark}
Let $Q$ be a compact $n$-dimensional Riemannian manifold with 
$n > 2$ even and $\pi_1(Q)$ finite.
It is easy to check that 
Theorem~\ref{contactthm} and Proposition \ref{Reebtypecosphere} imply
that if the odd Betti numbers
$\beta_{2i+1}(Q) := \dim\big( H^{2i+1}(Q;\R)\big) $ do not all vanish,
then the geodesic flow is not toric integrable.
However, in many cases the cohomological techniques in this paper 
are not sufficient to  prove that the cosphere bundle $S(T^*Q)$
is not a contact toric manifold.
For example, let $Q = S^2 \times S^2$ and let $M$ be the principal circle bundle
over $S^2 \times S^2 \times S^2$ with first Chern class
$a + b + 2c$, where $\{a,b,c\}$ is the natural
basis for  $H^2(S^2 \times S^2 \times S^2;\Z) = \Z^3$.
Then $M$ is a contact toric manifold and
$$ 
H^i(S(T^*Q;\Z))= 
H^i(M;\Z) =  
\begin{cases}
\Z & i = 0 \mbox{ or } 7  \\
\Z^2 & i = 2 \mbox{ or } 5 \\
\Z_4 & i = 4 \\
0 & \mbox{otherwise}.
\end{cases}
$$
In fact, we don't know if $S^2 \times S^2$ admits a toric
integrable geodesic flow.
A fortiori,
we don't know whether 
the list described in Corollary~\ref{maincor}
includes {\em every} compact,  connected Riemannian manifold with toric 
integrable geodesic flow. 
\end{remark}

\begin{remark}
Let $Q$ be a compact, connected  $n$-dimensional Riemannian manifold,  
and assume that  the geodesic flow is toric integrable. 
Even if   $n \neq 3$ is odd or $\pi_1(Q)$ is infinite,
we don't know whether $Q$ is necessarily {\em diffeomorphic} to  $\T^n$.
This doesn't follow from Corollary~\ref{maincor};
for example, 
when $n \geq 5$ there are fake tori that are homeomorphic to 
$\T^n$, but not diffeomorphic (see \cite{ks}).
However, Theorem~\ref{mainresult} would imply that $Q$ is diffeomorphic to $\T^n$  if we could
{\em also} prove the following:
\begin{quote}
If $Q$ and $Q'$ are homeomorphic compact manifolds and
$S(T^*Q)$ is contactomorphic to $S(T^*Q')$, then  $Q$ is diffeomorphic
to $Q'$.
\end{quote}
However, this statement seems very difficult to prove; it is closely
related to many important problems in symplectic topology; cf. \cite{Ab}
\end{remark}

\section{Contact toric manifolds}

In this section, we 
(partially) calculate the cohomology of all  
compact
contact toric manifolds with $\dim(M) > 5$.
This calculation relies heavily on Lerman's classification
of 
compact
contact toric manifolds \cite{l2}, which builds on ideas
of  Banyaga and Molino, and  Boyer and Galicki  \cite{bm, bg}.

\begin{theorem}\label{contactthm}
Let $M$ be a compact, connected, contact toric manifold with $\dim(M)=2n-1>5$.  
\begin{itemize}
\item[(i)] If $\pi_1(M)$ is infinite, then
$M$ is 
equivariantly contactomorphic 
to $\T^k \times S^{2n-k-1}$ 
for some
$0 < k \leq n$. 
\item[(ii)] If $\pi_1(M)$ is finite, then  $M$ can be given the structure of a principal 
circle bundle 
with curvature $\omega$ over a symplectic toric orbifold  $(N,\omega)$.
\end{itemize}
Here the contact toric structure on
$\T^k \times S^{2n-1-k}$ is induced by the restriction of  
\begin{gather*}
\sum p_i d q_i +  \textstyle\frac{1}{2} \sqrt{-1} \sum \left( 
\overline{z}_i d z_i - z_i d \overline{z}_i \right) 
\ \ \in \Omega^1\big( T^k \times R^k \times \C^{n-k}) \big) \quad \mbox{to} \\
\big\{(q,p,z) \in \T^k \times \R^k \times \C^{n-k} \  \big| \
\|p\|^2 + \|z\|^4 = 1 \big\}.
\end{gather*}
In particular, 
$\T^n \times S^{n-1}$ 
 gets its contact toric structure as
the cosphere bundle of $\T^n$.
\end{theorem}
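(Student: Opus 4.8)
The plan is to \emph{derive} this theorem from Lerman's classification of compact connected contact toric manifolds, \cite[Theorem 2.18]{l2}, rather than to reprove it: the real content already lives there, and our job is to repackage it according to the dichotomy $\pi_1(M)$ infinite versus finite. Building on Banyaga--Molino \cite{bm} and Boyer--Galicki \cite{bg}, Lerman's classification shows that when $\dim M = 2n-1 > 3$, every compact connected contact toric manifold $M$ is either \emph{of Reeb type} --- meaning $M$ carries an invariant contact form whose Reeb vector field is generated by a circle subgroup of the acting torus $\T^n$ --- or it is not. First I would record the two resulting model families: the non-Reeb-type examples are exactly the products $\T^k \times S^{2n-k-1}$ equipped with the explicit contact form displayed in the statement, while the Reeb-type examples are, by the Boyer--Galicki picture, precisely the total spaces of the principal circle bundle generated by the (periodic) Reeb action over the symplectic reduction, which is a compact symplectic toric orbifold $(N,\omega)$ with curvature $\omega$.

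The crux is then to match these two families with parts (i) and (ii) via the fundamental group. For the products I would simply compute $\pi_1$: since $\dim M > 5$ forces $n \geq 4$, and $0 < k \leq n$, the sphere factor has dimension $2n-k-1 \geq n-1 \geq 3$, hence is simply connected, so $\pi_1\big(\T^k \times S^{2n-k-1}\big) = \Z^k$, which is infinite because $k \geq 1$. For the Reeb-type case I would argue that $\pi_1(M)$ is finite: a compact symplectic toric orbifold $N$ has finite orbifold fundamental group (its orbit space is a contractible polytope, so $\pi_1^{\mathrm{orb}}(N)$ is generated by the finitely many finite isotropy groups), while the Reeb circle bundle $M \to N$ has Euler class $[\omega]$, which is non-torsion since $\omega$ is symplectic; feeding these into the homotopy exact sequence of the Seifert fibration places $\pi_1(M)$ in an extension of the finite group $\pi_1^{\mathrm{orb}}(N)$ by a finite cyclic group, and so $\pi_1(M)$ is finite.

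These two computations yield the equivalences ``$M$ not of Reeb type $\iff \pi_1(M)$ infinite'' and ``$M$ of Reeb type $\iff \pi_1(M)$ finite,'' from which (i) and (ii) follow at once. It then remains to justify the explicit contact form: I would verify directly that the displayed one-form restricts to a contact form on the stated hypersurface $\{\|p\|^2 + \|z\|^4 = 1\}$ and that the resulting product structure is as claimed, and observe that when $k=n$ the $\C^{n-k}$ factor disappears, the hypersurface becomes the unit cosphere $\{\|p\| = 1\} \subset \T^n \times \R^n$, and the form reduces to the Liouville form $\sum p_i\, dq_i$ --- so that $\T^n \times S^{n-1}$ is indeed the cosphere bundle of $\T^n$ with its standard contact toric structure.

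I expect the main obstacle to be the finiteness of $\pi_1(M)$ in the Reeb-type case, since this is where the orbifold must be handled with care: one needs $N$ to have finite orbifold fundamental group despite its singularities, and the Euler class of the Reeb bundle to be genuinely non-torsion. Modulo these two points --- together with the routine verification of the explicit contact form --- everything else is bookkeeping layered on top of \cite[Theorem 2.18]{l2}.
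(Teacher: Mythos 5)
Your overall architecture is the paper's: treat Lerman's classification as the engine and sort the two families by $\pi_1$. The paper's proof, though, runs a \emph{trichotomy} rather than your dichotomy: (a) Reeb type, i.e.\ some $X\in\ft$ with $\langle\Psi_\alpha,X\rangle>0$ everywhere, handled by \cite[Theorem 4.3]{ls} together with a short normalization argument (choose $X$ generating the Reeb circle and rescale so $\langle\Psi_\alpha,X\rangle=1$, making $\alpha$ a connection form with $\pi^*\omega=d\alpha$) --- a verification of the ``curvature $\omega$'' clause that you assert but never supply; (b) free action, where Banyaga--Molino's Theorem C gives $\T^n\times S^{n-1}$, and where the hypothesis $n>3$ is genuinely used (principal $\T^3$-bundles over $S^2$ need not be trivial --- this is exactly why the theorem excludes $\dim M=5$); and (c) non-free, non-Reeb, where one needs $\Psi_\alpha\neq 0$ \cite[Lemma 2.12]{l2}, the convexity theorem \cite{l3}, the separation remark producing a nontrivial maximal linear subspace of dimension $0<k<n$ in $C(M)$, and the uniqueness statement of \cite[Theorem 2.18(4)]{l2} --- which applies \emph{only} to non-free actions, so your compressed claim ``non-Reeb-type examples are exactly the products'' cannot be extracted from the classification without the free/non-free split. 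Your $\pi_1$ computation for the products agrees with the paper's.

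The genuine gap is at the step you yourself flag as the crux: finiteness of $\pi_1(M)$ in the Reeb case, which the paper settles by citing \cite[Theorem 1.1]{l1} and which your substitute argument gets wrong at both joints. First, ``$\pi_1^{\mathrm{orb}}(N)$ is generated by finitely many finite isotropy groups, hence finite'' is a non sequitur: a group generated by finite subgroups can be infinite (the infinite dihedral group is generated by two copies of $\Z_2$). The correct statement, from the labeled-polytope description of symplectic toric orbifolds, is $\pi_1^{\mathrm{orb}}(N)\cong \Z^n/\langle \ell_F u_F\rangle$, finite because the facet normals of a \emph{compact} polytope span $\R^n$. Second, ``the Euler class $[\omega]$ is non-torsion, so the fiber contributes a finite cyclic group to $\pi_1(M)$'' is false in general: the Heisenberg nilmanifold is a circle bundle over $T^2$ whose Euler class generates $H^2(T^2;\Z)$, yet the fiber class generates an \emph{infinite} cyclic central subgroup of $\pi_1$. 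What controls the image of the fiber is the pairing of $[\omega]$ with $\pi_2^{\mathrm{orb}}(N)$ (spherical classes), not non-torsionness in $H^2$; to get a nonzero pairing one passes to the orbifold universal cover (using the finiteness of $\pi_1^{\mathrm{orb}}(N)$ just discussed) and invokes an orbifold Hurewicz theorem --- which is, in substance, the content of the theorem of Lerman that the paper cites. So either quote \cite[Theorem 1.1]{l1} as the paper does, or repair both computations; as written, the Seifert-fibration argument does not close.
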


\begin{proof}
This proof is adapted from the proof of Theorem 1.3 in \cite{l2}.

The contact structure on $M$ is induced by a contact form $\alpha$ that is
invariant under the action of $\T^{n}$ on $M$. 
For any $X \in \mathfrak{t}$, 
let $X_M$ be the corresponding vector field on $M$. The {\bf  $\mathbf{\alpha}$-moment map}
$\Psi_\alpha \colon M \to \mathfrak{t}^*$
is then defined by
$$
\langle \Psi_{\alpha}(p) , X \rangle =  \alpha_p (X_M(p) )
$$
for all $p$ in $M$ and $X \in \mathfrak{t}$.
Let $C(M) = \{t\Phi_{\alpha}(M) \ | \ t \in [0,\infty) \}$
be the cone on the image of $\Psi_{\alpha}$ in $\mathfrak{t}^*$. 

By \cite[Lemma 2.12]{l2}, $\Psi_{\alpha}(p) \neq 0$ for all $p \in M$.
Since $n >2$, this implies that $C(M)$ is a convex polyhedral cone
\cite[Theorem 1.2]{l3}.

Assume
first that there exists  $X \in \ft$ such that $\langle \Psi_{\alpha}(p) ,X \rangle > 0$
for all $p \in M$.  
Theorem 1.1 
in \cite{l1} asserts that every contact toric manifold of 
this type has finite fundamental group.  
Moreover, by \cite[Theorem 4.3]{ls} (see also \cite{bg}), every contact toric manifold of this type 
can be given the structure of a principal 
circle bundle over a symplectic toric orbifold $(N,\omega)$.
In fact, this circle bundle has curvature $\omega$.
To see this, note that in 
their proof they show that
we may choose $X$ and $\alpha$ so that the vector field $X$ generates
the circle action and also so that 
$\langle \Psi_{\alpha}(p), X \rangle = 1$.
Hence, $\alpha$ is a connection $1$-form on the bundle 
$S^1 \to M \stackrel{\pi}{\to} N$. Finally,
it is easy to check that $\pi^*(\omega) = d \alpha$.

Next, assume that the $\T^n$ action is free.
Since $n > 3$, by \cite[Theorem C]{bm} (alternatively, by \cite[Theorem 2.18 (3)]{l2})
$M$ is equivariantly contactomorphic to $\T^n \times S^{n-1}$,
the cosphere bundle of $\T^n$.
Clearly, since $n > 2$, $\pi_1(T^n \times S^{n-1}) = \Z^n$.

Finally, assume that the action is not free and that
there does not exist any $X \in \ft$ such that $\langle \Psi_{\alpha}(p), X \rangle > 0$
for all $p \in M$. Lemma 4.5 in \cite{l2} implies that, since  the action is not free, $C(M) \neq \ft^*$.
Since $C(M)$ is a closed 
convex cone,
this implies that the maximal linear
subspace of $C(M)$ has dimension $k$, where $0 < k < n$; (see Remark \ref{closedcone}).
Hence, 
\cite[Theorem 2.18(4)]{l2}
implies that $C(M)$ is isomorphic to the moment cone of 
$M' = \T^k \times S^{2n-1-k}$
and that $M$ is equivariantly 
contactomorphic to $M'$.
Finally, note that the fact that $0 < k < n$
implies that $\pi_1(\T^k \times S^{2n-1-k} ) = \Z^k$ is infinite.

Since these are the only three possibilities, the claim follows immediately. \end{proof}

\begin{remark}\label{closedcone}
Let $C \subset \ft^*$ be a closed convex cone.
If there does not exist $X \in \ft$ such that $\langle c, X \rangle>  0$
for all $c \in C \smallsetminus \{0\}$, then
$C$ contains a non-trivial linear subspace.
This follows by an easy inductive argument
from the well-known fact that if $C \neq \ft^*$,
there exists $\xi \in \ft$ such that $\langle c, \xi \rangle \geq 0$
for all $c \in C$.
\end{remark}

Given a space $X$,
let $\beta_j(X) := \dim H^j(X;\R)$
be the $j$th Betti number of $X$.
The cohomology
of the manifolds listed in part (i) of  Theorem \ref{contactthm} are 
given immediately by the K\"{u}nneth formula. 

\begin{prop}\label{productBetti}
Given  integers $k$ and $n$ with $0<k\leq n$,  
\begin{equation*}
\beta_{m}\big(\T^k \times S^{2n-k-1}\big) = 
\sum_{p=0}^{m}\beta_{p}\big(\T^k\big)\beta_{m-p}\big(S^{2n-k-1}\big) 
= \binom{k}{m}+\binom{k}{2n-m-1} \qquad 
\mbox{for all} \ m \in \Z,
\end{equation*}
where by convention $\binom{k}{m}=0$ unless $0 \leq m \leq k$.
Moreover, $H^*\big(\T^k \times S^{2n - k -1}; \Z \big)$ is torsion-free.
\end{prop}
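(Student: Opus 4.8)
The plan is to read off both the integral cohomology and the Betti numbers directly from the K\"unneth theorem, using the standard cohomology rings of the two factors. First I would recall that $H^*\big(\T^k;\Z\big)$ is the exterior algebra $\Lambda^*\big(\Z^k\big)$, so it is finitely generated and free abelian with $H^j\big(\T^k;\Z\big)\cong\Z^{\binom{k}{j}}$, and hence $\beta_j\big(\T^k\big)=\binom{k}{j}$. Likewise $H^*\big(S^{2n-k-1};\Z\big)$ is free abelian, equal to $\Z$ in degrees $0$ and $2n-k-1$ and zero otherwise; here the hypothesis $0<k\leq n$ forces $2n-k-1\geq n-1\geq 0$, and in the application ($n>3$) this exponent is strictly positive, so $S^{2n-k-1}$ is a genuine positive-dimensional sphere.

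Next I would invoke the K\"unneth theorem for the product. Because the integral cohomology of each factor is finitely generated and free, every $\mathrm{Tor}$ term vanishes, and one obtains
\begin{equation*}
H^m\big(\T^k\times S^{2n-k-1};\Z\big)\cong\bigoplus_{p+q=m}H^p\big(\T^k;\Z\big)\otimes_\Z H^q\big(S^{2n-k-1};\Z\big).
\end{equation*}
A tensor product of free abelian groups is free abelian, and so is a finite direct sum of such groups; this gives the torsion-freeness assertion at once. Passing to ranks (equivalently, tensoring with $\R$) yields the first displayed equality $\beta_m=\sum_{p=0}^{m}\beta_p\big(\T^k\big)\beta_{m-p}\big(S^{2n-k-1}\big)$.

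Finally I would collapse the sum. Since $\beta_q\big(S^{2n-k-1}\big)$ equals $1$ for $q\in\{0,\,2n-k-1\}$ and $0$ otherwise, only two summands survive, giving $\beta_m=\binom{k}{m}+\binom{k}{m-(2n-k-1)}$. The second closed form in the statement then follows from the symmetry $\binom{k}{j}=\binom{k}{k-j}$: setting $j=m-(2n-k-1)=m-2n+k+1$ one computes $k-j=2n-m-1$, so that $\binom{k}{m-(2n-k-1)}=\binom{k}{2n-m-1}$. The computation is entirely routine, and there is no genuine obstacle; the only points requiring care are the vanishing of the $\mathrm{Tor}$ terms (which is exactly what makes the cohomology torsion-free) and the final re-indexing, where one applies the binomial symmetry and keeps track of the convention $\binom{k}{m}=0$ outside $0\leq m\leq k$ so that the identity holds for every $m\in\Z$.
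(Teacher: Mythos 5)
Your proposal is correct and follows exactly the route the paper intends: the paper offers no separate proof, stating only that these Betti numbers ``are given immediately by the K\"unneth formula,'' which is precisely your argument, with the vanishing of the $\mathrm{Tor}$ terms (from freeness of $H^*(\T^k;\Z)$ and $H^*(S^{2n-k-1};\Z)$) giving torsion-freeness and the binomial symmetry $\binom{k}{j}=\binom{k}{k-j}$ handling the re-indexing. Your extra care about the edge case $2n-k-1=0$ and the convention for $\binom{k}{m}$ outside $0\leq m\leq k$ is a welcome refinement of what the paper leaves implicit.
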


In order to  analyze
the Betti numbers of the 
the second class of manifolds  described in Theorem \ref{contactthm},
we need to review some facts about symplectic toric orbifolds. 
Since they are K\"{a}hler, the Hard Lefschetz Theorem implies the following.

\begin{theorem}\label{hardlef}
Let $(N^{2n-2},\omega)$ be a compact symplectic toric orbifold. Then, the map
\begin{displaymath}
\wedge{[\omega]}:H^{i}(N;\R) \to H^{i+2}(N;\R)
\end{displaymath}
is injective for all $i <  n-1$. 
\end{theorem}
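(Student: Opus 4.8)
The plan is to follow the hint in the surrounding text: a compact symplectic toric orbifold carries a compatible K\"ahler structure, so the statement should be a direct consequence of the Hard Lefschetz Theorem in the K\"ahler (orbifold) category. Write $m = n-1$ for the complex dimension of $N$, so that $\dim_\R N = 2m = 2n-2$. The Lefschetz operator $L = \wedge[\omega]\colon H^i(N;\R)\to H^{i+2}(N;\R)$ raises cohomological degree by two, and the assertion that $L$ is injective for $i < n-1$ is precisely the claim that $L$ is injective on $H^i$ for $i \le m-1$. So the goal reduces to establishing the full Hard Lefschetz isomorphisms and then reading off this injectivity below the middle degree.

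First I would establish that $N$ admits the structure of a compact K\"ahler orbifold whose K\"ahler class is $[\omega]$. This follows from the classification of symplectic toric orbifolds: every compact symplectic toric orbifold arises as a symplectic reduction of some $\C^d$ by a subtorus of $\T^d$, determined by a labeled rational polytope. Since $\C^d$ is K\"ahler and the residual $\T^n$-action is Hamiltonian and holomorphic for a suitable invariant complex structure, the reduced space inherits a K\"ahler orbifold structure whose K\"ahler form is exactly $\omega$. Hence $N$ is a compact K\"ahler orbifold of complex dimension $m = n-1$.

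Next I would invoke Hodge theory on orbifolds. The key point is that, over $\R$, the orbifold de Rham cohomology of $N$ agrees with the singular cohomology $H^*(N;\R)$ of the underlying space, and Baily's extension of Hodge theory to $V$-manifolds supplies a Hodge decomposition together with the full hard Lefschetz package: the operator $L$, its adjoint $\Lambda$, and the counting operator $H = [L,\Lambda]$ generate an $\mathfrak{sl}_2$-action on harmonic forms exactly as in the smooth case. Consequently $L^{m-i}\colon H^i(N;\R)\to H^{2m-i}(N;\R)$ is an isomorphism for every $0 \le i \le m$. The injectivity claim is then a formal consequence: if $i \le m-1 = n-2$ and $\alpha \in H^i(N;\R)$ satisfies $L\alpha = 0$, then $L^{m-i}\alpha = L^{m-i-1}(L\alpha) = 0$ with $m-i \ge 1$, and since $L^{m-i}$ is injective on $H^i$ we conclude $\alpha = 0$. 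Thus $L$ is injective for all $i \le n-2$, i.e. for all $i < n-1$.

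I expect the only genuine subtlety --- the main obstacle --- to be the passage to the orbifold setting: justifying both that symplectic toric orbifolds are truly K\"ahler \emph{as orbifolds} and that the analytic machinery (harmonic representatives, the K\"ahler identities, and the resulting $\mathfrak{sl}_2$-representation) transfers verbatim to $V$-manifolds. Once those two inputs are granted, the argument is identical to the classical smooth proof and the degree bookkeeping is immediate.
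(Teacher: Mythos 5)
Your proposal is correct and follows essentially the same route as the paper, which gives no detailed argument beyond the remark that compact symplectic toric orbifolds are K\"ahler and invoking the Hard Lefschetz Theorem; you have simply supplied the supporting details (the K\"ahler structure via the Lerman--Tolman reduction construction, Baily's orbifold Hodge theory, and the formal deduction of injectivity below middle degree from the Lefschetz isomorphisms), all of which are accurate.
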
 

Our next result is a consequence of a theorem of Danilov 
\cite{d},
which describes the cohomology ring of  symplectic toric orbifolds.

\begin{prop}\label{symptoricorbcoho}
Let $(N,\omega)$ be a compact, connected
symplectic toric orbifold. As a ring, $H^{*}(N;\R)$ is generated by $H^{2}(N;\R)$; in particular, $H^{i}(N;\R)=0$ for $i$ odd. 
\end{prop}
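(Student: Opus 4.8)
The plan is to deduce the statement from Danilov's computation of the cohomology ring, once $N$ has been identified with a toric variety via the combinatorics of its moment polytope. First I would recall the orbifold analogue of the Delzant theorem (Lerman--Tolman): a compact connected symplectic toric orbifold $(N,\omega)$ of dimension $2m$ is classified up to equivariant symplectomorphism by a simple rational labeled polytope $\Delta \subset \ft^*$. The inward normal fan of $\Delta$ is then a complete simplicial rational fan $\Sigma$, and the associated toric variety $X_\Sigma$ is a compact complex orbifold whose underlying space is $N$; in particular $H^*(N;\R) \cong H^*(X_\Sigma;\R)$ as graded rings.

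Next I would invoke Danilov's theorem \cite{d} for the complete simplicial fan $\Sigma$. Writing $\rho_1,\dots,\rho_d$ for the rays of $\Sigma$ (equivalently, the facets of $\Delta$) and $v_1,\dots,v_d$ for their generators, Danilov gives a graded ring isomorphism
$$
H^*(X_\Sigma;\R) \;\cong\; \R[x_1,\dots,x_d]\big/(I+J),
$$
where each $x_i$ has degree $2$ (it is the class of the toric divisor attached to $\rho_i$), $I$ is the Stanley--Reisner ideal generated by the monomials $x_{i_1}\cdots x_{i_r}$ whose indices do not span a cone of $\Sigma$, and $J$ is the linear ideal generated by the forms $\sum_i \langle u, v_i\rangle\, x_i$ as $u$ ranges over a basis of $\ft$.

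The proposition then follows formally from the shape of this presentation. The presenting polynomial ring is generated by the degree-$2$ elements $x_1,\dots,x_d$, and quotienting by a homogeneous ideal preserves this property; hence $H^*(N;\R)$ is generated as a ring by its degree-$2$ part $H^2(N;\R)$. Moreover, since every generator sits in even degree, every monomial, and therefore every element of the quotient, lies in even total degree. Consequently $H^i(N;\R)=0$ for all odd $i$.

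I expect the main point requiring care to be not a hard estimate but a matter of applicability and bookkeeping: checking that the symplectic toric orbifold $N$ is genuinely the underlying space of the algebraic toric orbifold $X_\Sigma$ to which Danilov's computation applies, and that his theorem, classically stated in the algebro-geometric setting, remains valid for simplicial (rather than smooth) fans with real coefficients. Both are standard: rationality of $\Delta$ guarantees an honest rational fan, and the simplicial hypothesis is exactly what makes the orbifold rationally smooth, so that the Stanley--Reisner presentation holds over $\R$.
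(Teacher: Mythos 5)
Your proof is correct and takes essentially the same approach as the paper: the paper offers no argument beyond the remark that the proposition ``is a consequence of a theorem of Danilov \cite{d},'' and your write-up --- passing through the Lerman--Tolman labeled polytope, its normal fan, and Danilov's Stanley--Reisner presentation with degree-$2$ generators --- is precisely the intended expansion of that citation. You also correctly handle the one genuine subtlety, namely that the facet labels change only the orbifold structure and not the underlying space, so that $H^{*}(N;\R)$ agrees with the real cohomology ring of the toric variety of the normal fan, to which Danilov's simplicial computation applies.
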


\begin{prop}\label{ReebtypeBetti}
Let $M$ be a principal $S^1$-bundle 
with curvature $\omega$ over a compact symplectic toric orbifold  $(N,\omega)$.
Then, 
$$\beta_{i}(M) =
\begin{cases}
0 & \mbox{if $i < n$ is odd, } \\
\beta_i(N) - \beta_{i-2}(N) & \mbox{if  $i < n + 1$ is even}.
\end{cases}
$$
\end{prop}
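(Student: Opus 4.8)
The plan is to read off the cohomology of $M$ from the Gysin sequence of the circle bundle $S^1 \to M \xrightarrow{\pi} N$. As noted in the proof of Theorem~\ref{contactthm}, the connection $1$-form $\alpha$ satisfies $\pi^*\omega = d\alpha$, so the real Euler class of this bundle is (a nonzero scalar multiple of) $[\omega] \in H^2(N;\R)$. Since only the kernels and cokernels of the associated maps will matter, I may identify the Euler class with $[\omega]$ outright. The Gysin sequence with $\R$ coefficients then reads
\[
\cdots \to H^{i-2}(N;\R) \xrightarrow{\wedge[\omega]} H^i(N;\R) \xrightarrow{\pi^*} H^i(M;\R) \to H^{i-1}(N;\R) \xrightarrow{\wedge[\omega]} H^{i+1}(N;\R) \to \cdots,
\]
which breaks into short exact sequences
\[
0 \to \operatorname{coker}\!\big(\wedge[\omega]\colon H^{i-2}(N) \to H^i(N)\big) \to H^i(M) \to \ker\!\big(\wedge[\omega]\colon H^{i-1}(N) \to H^{i+1}(N)\big) \to 0.
\]
Hence $\beta_i(M)$ equals the sum of the dimensions of this cokernel and this kernel, and the whole proposition reduces to computing those two quantities in the stated range.

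The two remaining inputs are exactly the facts recorded just above: Proposition~\ref{symptoricorbcoho} gives $H^{\mathrm{odd}}(N;\R) = 0$, and Theorem~\ref{hardlef} (Hard Lefschetz, using that $\dim_\C N = n-1$) gives that $\wedge[\omega]\colon H^j(N) \to H^{j+2}(N)$ is injective for $j < n-1$. I would then split into the two cases. If $i < n$ is odd, both $H^{i-2}(N)$ and $H^i(N)$ vanish, so the cokernel term is zero; and $H^{i-1}(N)$ injects into $H^{i+1}(N)$ since $i-1 \le n-2 < n-1$, so the kernel term is also zero, giving $\beta_i(M)=0$. If $i < n+1$ is even, then $H^{i-1}(N)=0$ kills the kernel term, while $\wedge[\omega]\colon H^{i-2}(N) \to H^i(N)$ is injective because $i-2 \le n-2 < n-1$; therefore its cokernel has dimension $\beta_i(N) - \beta_{i-2}(N)$, which is precisely the claimed value of $\beta_i(M)$.

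The one point that requires care, and which I expect to be the main (though mild) obstacle, is making the degree bookkeeping line up with the Hard Lefschetz injectivity range: the bound $j < n-1$ must be checked against each degree appearing in the Gysin maps, and it is exactly the hypotheses ``$i<n$'' and ``$i<n+1$'' that guarantee $i-2$ and $i-1$ fall below $n-1$. A secondary technical point is justifying that $[\omega]$ may stand in for the Euler class; this is harmless since they agree up to a nonzero real scalar and we only use rank information. Everything else is a routine dimension count in the short exact sequences.
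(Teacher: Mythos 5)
Your proposal is correct and follows essentially the same route as the paper's proof: both read $\beta_i(M)$ directly off the Gysin sequence of $S^1 \to M \to N$ with Euler class $[\omega]$, using Proposition~\ref{symptoricorbcoho} to kill the odd-degree groups of $N$ and the Hard Lefschetz injectivity of Theorem~\ref{hardlef} in exactly the degree ranges forced by the hypotheses $i<n$ and $i<n+1$. Your added care about the scalar discrepancy between the Euler class and $[\omega]$, and your explicit splitting into short exact sequences, are harmless refinements of the same argument.
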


\begin{proof}
We may assume without loss of generality that $N$ is connected.
Consider the Gysin sequence
\begin{equation}\label{GysinM}
\cdots \to H^{i-2}(N;\R) \overset{\wedge{[\omega]}}\longrightarrow  
H^{i}(N;\R) \to H^{i}(M;\R) \to H^{i-1}(N;\R) \overset{\wedge{[\omega]}}\longrightarrow  H^{i+1}(N;\R) \to \cdots. 
\end{equation}
If $i$ is odd then $H^i(N;\R)$ = 0 by Proposition~
\ref{symptoricorbcoho},
and if $i < n$ then $\wedge [\omega] \colon H^{i-1}(N;\R) \to H^{i+1}(N;\R)$ is injective by Theorem~\ref{hardlef}.
Hence, the first case follows immediately from  \eqref{GysinM}.
Similarly, if $i$ is even then $H^{i-1}(N;\R) = 0$,
and if $i < n+1$ then
$\wedge [\omega] \colon H^{i-2}(N;\R) \to H^{i}(N;\R)$ is injective.
Hence, the second case also follows immediately from  \eqref{GysinM}.
\end{proof}

Our final  result is that contact 
toric manifolds behave nicely with respect to finite-sheeted covers.

\begin{lemma}\label{coveringcontact}
Let $M$ be a compact, contact toric manifold. If 
$\widetilde{M}$ is any finite-sheeted cover
of $M$, then $\widetilde{M}$ is a compact, contact toric manifold.
\end{lemma}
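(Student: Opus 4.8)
The plan is to equip $\widetilde{M}$ with the pulled-back contact structure and then manufacture an effective torus action by lifting the action on $M$ through the covering map; the one genuinely delicate point is getting an honest \emph{effective} action by an $n$-torus rather than merely a lifted $\R^n$-action. Throughout I will assume $\widetilde{M}$ is connected, which is the only case needed in the application. Write $\pi \colon \widetilde{M} \to M$ for the projection. Since $M$ is compact and $\pi$ has finitely many sheets, $\widetilde{M}$ is compact. As noted in the proof of Theorem~\ref{contactthm}, we may choose a $\T^n$-invariant contact form $\alpha$ with $\xi = \ker \alpha$. Because $\pi$ is a local diffeomorphism, $\widetilde{\alpha} := \pi^* \alpha$ satisfies $\widetilde{\alpha} \wedge (d\widetilde{\alpha})^{n-1} = \pi^*\big(\alpha \wedge (d\alpha)^{n-1}\big) \neq 0$, so $\widetilde{\xi} := \ker \widetilde{\alpha}$ is a contact structure on $\widetilde{M}$. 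It remains to produce an effective action of an $n$-torus preserving $\widetilde{\xi}$.

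The difficulty is that the given $\T^n$-action need not lift to $\widetilde{M}$, since the image of the orbit map $\pi_1(\T^n) \to \pi_1(M)$ may fail to lie in $\pi_* \pi_1(\widetilde{M})$. To sidestep this I will lift the action of the universal cover instead. Let $\exp \colon \R^n \to \T^n$ be the covering homomorphism, and let $\bar{a} \colon \R^n \times M \to M$ be the $\R^n$-action obtained by composing $\exp$ with the given action. Because $\pi_1(\R^n) = 0$ we have $\pi_1(\R^n \times \widetilde{M}) \cong \pi_1(\widetilde{M})$, and since $\bar{a}\circ(\mathrm{id}\times\pi)$ restricts to $\pi$ on $\{0\}\times\widetilde{M}$, the lifting criterion is automatically satisfied. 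Hence there is a unique continuous — and therefore smooth, as $\pi$ is a local diffeomorphism — lift $\widehat{a} \colon \R^n \times \widetilde{M} \to \widetilde{M}$ with $\widehat{a}(0,\widetilde{m}_0)=\widetilde{m}_0$. A routine application of the uniqueness of lifts (comparing $\widehat{a}(u+v,\widetilde{m})$ with $\widehat{a}(u,\widehat{a}(v,\widetilde{m}))$, which cover the same map into $M$ and agree at $(0,\widetilde{m}_0)$, and similarly comparing $\widehat{a}(0,\cdot)$ with the identity) shows that $\widehat{a}$ is a genuine $\R^n$-action. It preserves $\widetilde{\xi}$ because $\pi \circ \widehat{a}(v,\cdot) = \bar{a}(v,\cdot) \circ \pi$ and each $\bar{a}(v,\cdot)$ preserves $\xi$.

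Finally I must descend $\widehat{a}$ to an effective torus action, and this is where the argument really turns. If $v \in \R^n$ acts trivially on $\widetilde{M}$, then $\exp(v)$ acts trivially on $M$, so $\exp(v)=e$ by effectiveness and $v \in \Z^n$; thus the kernel $L$ of $\widehat{a}$ lies in $\Z^n$. For any $v \in \Z^n$ the map $\widehat{a}(v,\cdot)$ covers the identity on $M$, i.e.\ it is a deck transformation of $\pi$. Since a finite-sheeted covering has a finite deck group, the homomorphism $\Z^n \to \mathrm{Deck}(\pi)$, $v \mapsto \widehat{a}(v,\cdot)$, has finite-index kernel $L$; in particular $L$ is a full-rank lattice in $\R^n$. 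Therefore $\widehat{a}$ descends to an \emph{effective} action of the $n$-torus $\R^n/L$ on $\widetilde{M}$, which still preserves $\widetilde{\xi}$, so $\widetilde{M}$ is a compact contact toric manifold. The crux is exactly this last passage: routing the lift through the universal cover $\R^n$ makes the lifting criterion free, while the finiteness of the deck group is precisely what forces the kernel lattice to have full rank, guaranteeing that the quotient is again an $n$-dimensional torus.
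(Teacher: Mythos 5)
Your proof is correct and takes essentially the same route as the paper: pull back an invariant contact form through the covering map (the forms and the verification $p^*\alpha \wedge (d\,p^*\alpha)^{n-1} \neq 0$ are identical), then use finiteness of the cover to obtain a lifted effective torus action. The paper merely asserts that ``since the covering is finite, the torus action lifts to a torus action''; your argument --- lifting the $\R^n$-action via the simply connected universal cover of $\T^n$, then using the finiteness of the deck group to show the kernel $L \subseteq \Z^n$ is a full-rank lattice so that $\R^n/L$ is again an $n$-torus acting effectively --- is exactly the detail behind that one-line assertion.
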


\begin{proof}
Let $p:\widetilde{M} \to M$ be the covering map, and $\alpha$ an invariant
 contact form on $M$ such 
that $\ker(\alpha)$ is the contact structure on $M$. Then, since $p$ is a local diffeomorphism, 
$p^*\alpha \wedge d({p}^*\alpha)^{n-1} = p^*(\alpha \wedge (d\alpha)^{n-1})$ is  nowhere zero,
and so $p^*\alpha$ is a contact form on $\widetilde{M}$. 
Since the covering is finite, 
the torus action on $M$ lifts to a torus action 
on $\widetilde{M}$.
This lifted action is effective and preserves $p^*\alpha$. 
\end{proof}

\section{Cosphere bundles}

In this section, we 
(partially) calculate the cohomology
of cosphere bundles, following \cite{mmo}.
We begin with an easy consequence of the Gysin sequence.

\begin{lemma}\label{Gysin}
Let $M = S (T^* Q)$, where $Q$ is a connected $n$-dimensional manifold.
If $Q$ is orientable, fix an orientation and
let  $e \in H^n(Q;\Z)$ be the Euler class of the bundle
$S^{n-1} \to M \overset{p} \to Q$. 
Then
\begin{itemize}
\item [(a)] 
$H^i(M;\Z) = H^i(Q;\Z)$ for all  $i < n-1$, and
\item [(b)]
$
H^{n-1}(M;\Z) =
\begin{cases}   
H^{n-1}(Q;\Z) \oplus \Z  & \text{if } Q \mbox{ is orientable and }  e = 0, \\
H^{n-1}(Q;\Z)  & \text{otherwise.}
\end{cases}
$
\end{itemize}
\end{lemma}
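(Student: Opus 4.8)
The plan is to read off the cohomology of $M = S(T^*Q)$ directly from the Gysin sequence of the sphere bundle $S^{n-1} \to M \overset{p}{\to} Q$, which is the unit cosphere bundle of the rank-$n$ vector bundle $T^*Q$. Since $T^*Q$ is orientable as a vector bundle exactly when $Q$ is orientable, I would handle both cases uniformly by using the Gysin sequence with coefficients in the fiberwise-top terms twisted by the orientation local system $\mathcal{Z}$ of $Q$; when $Q$ is orientable, $\mathcal{Z}$ is trivial and this reduces to the ordinary integral Gysin sequence. The relevant segment reads
\[
\cdots \to H^{i-n}(Q;\mathcal{Z}) \overset{\cup e}{\to} H^i(Q;\Z) \overset{p^*}{\to} H^i(M;\Z) \overset{\partial}{\to} H^{i-n+1}(Q;\mathcal{Z}) \overset{\cup e}{\to} H^{i+1}(Q;\Z) \to \cdots,
\]
where $e \in H^n(Q;\mathcal{Z})$ is the (twisted) Euler class of $T^*Q$, agreeing with the Euler class of the statement once $Q$ is oriented.

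For part (a), I would observe that when $i < n-1$ both flanking terms $H^{i-n}(Q;\mathcal{Z})$ and $H^{i-n+1}(Q;\mathcal{Z})$ sit in negative degree and hence vanish; exactness then forces $p^*$ to be an isomorphism $H^i(Q;\Z) \cong H^i(M;\Z)$. (Equivalently, this is the edge isomorphism in the Serre spectral sequence arising from the fact that the fiber $S^{n-1}$ is $(n-2)$-connected, so no orientation hypothesis is needed in this range.)

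For part (b), I would specialize to $i = n-1$. Then $H^{i-n}(Q;\mathcal{Z}) = H^{-1} = 0$ while $H^{i-n+1}(Q;\mathcal{Z}) = H^0(Q;\mathcal{Z})$, so the sequence collapses to the exact sequence
\[
0 \to H^{n-1}(Q;\Z) \overset{p^*}{\to} H^{n-1}(M;\Z) \overset{\partial}{\to} H^0(Q;\mathcal{Z}) \overset{\cup e}{\to} H^n(Q;\Z),
\]
which I would analyze in three cases. If $Q$ is non-orientable, the orientation local system has no nonzero invariant section, so $H^0(Q;\mathcal{Z}) = 0$ and $p^*$ is an isomorphism. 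If $Q$ is orientable with $e \neq 0$, then $\mathcal{Z}$ is trivial, $H^0(Q;\Z) = \Z$ and $H^n(Q;\Z) = \Z$ (as $Q$ is compact, connected, and oriented), and $\cup e \colon \Z \to \Z$ sends $1 \mapsto e$ and is therefore injective; hence $\partial = 0$ and again $p^*$ is an isomorphism. Finally, if $Q$ is orientable with $e = 0$, the map $\cup e$ vanishes and the sequence becomes the short exact sequence $0 \to H^{n-1}(Q;\Z) \to H^{n-1}(M;\Z) \to \Z \to 0$, which splits because $\Z$ is free, giving $H^{n-1}(M;\Z) \cong H^{n-1}(Q;\Z) \oplus \Z$.

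I expect the only genuine subtlety to be the bookkeeping in the non-orientable case: one must use the twisted Gysin sequence (equivalently, track the $\pi_1(Q)$-action on the top cohomology of the fiber in the Serre spectral sequence), place the orientation twist correctly on the fiberwise-top terms, and identify $H^0(Q;\mathcal{Z})$ with the global sections of the orientation sheaf, which vanish precisely because $Q$ is non-orientable. Everything else reduces to a routine exactness computation.
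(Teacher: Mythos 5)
Your proof is correct and takes essentially the same route as the paper: both arguments read (a) and (b) directly off the Gysin sequence of $S^{n-1} \to M \to Q$, the only cosmetic difference being that you run a single twisted sequence with coefficients in the orientation system $\mathcal{Z}$ (trivial when $Q$ is orientable), whereas the paper writes the ordinary and twisted sequences separately. One small correction: the lemma does not assume $Q$ is compact --- it is later applied to the noncompact universal cover $\tQ$ in Proposition~\ref{Qisatorus} --- so in the case $e \neq 0$ you should not cite compactness to get $H^n(Q;\Z) = \Z$; instead note, as the paper does, that $H^n(Q;\Z)$ is $\Z$ if $Q$ is compact and $0$ if not, so $e \neq 0$ already forces the compact case and the injectivity of $\cup\, e \colon H^0(Q;\Z) \to H^n(Q;\Z)$ goes through unchanged.
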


\begin{proof}
Assume first that $Q$ is orientable.
The Gysin sequence of the bundle  
$S^{n-1} \to M \overset{p} \to Q$
is exact:
\begin{equation}
\label{GysinQ}
\cdots\to H^{i-n}(Q;\Z) 
\overset{\cup{e}}\longrightarrow H^{i}(Q;\Z)
\to H^{i}(M;\Z) 
\to H^{i-n+1}(Q;\Z)
\overset{\cup{e}}\longrightarrow H^{i+1}(Q;\Z)
\to\cdots. 
\end{equation}
If $i < n-1$, then $H^{i-n}(Q;\Z)=H^{i-n+1}(Q;\Z)=0$. 
Hence, \eqref{GysinQ} implies claim (a).
If $Q$ is compact, then $H^n(Q;\Z) = \Z$; if $Q$ is not compact,
then $H^n(Q;\Z) = 0$.
In either case, $H^0(Q;\Z) = \Z$ and 
so if $e$ is not zero, then 
$\cup e \colon H^0(Q;\Z) \to H^n(Q;\Z)$ is injective.
Hence,  since  $H^{(n-1) - n}(Q;\Z) = 0$ and
$H^{(n-1) - n + 1}(Q;\Z) = \Z$,  claim (b) follows from \eqref{GysinQ} with $i = n-1$. 

So assume instead that $Q$ is not orientable.
In this case, the Gysin sequence with twisted integer coefficients is exact:
\begin{equation}\label{twistedGysinQ}
\cdots\to H^{i-n}(Q;\mathcal{Z}) 
\to
H^{i}(Q;\Z)
\to H^{i}(M;\Z) 
\to H^{i-n+1}(Q;\mathcal{Z})
\to
H^{i+1}(Q;\Z)
\to\cdots. 
\end{equation}
For any $i \leq n-1$, $H^{i-n}(Q;\mathcal{Z}) = H^{i-n+1}(Q;\mathcal{Z}) = 0$.
Hence, \eqref{twistedGysinQ} implies (a) and (b).
\end{proof}

Our main result is adapted from the statements of \cite[Theorems  2.1,  2.3,
and  3.1]{mmo}, and the proof of \cite[Theorem 2.3]{mmo}.

\begin{theorem}[McCord, Meyer, and Offin]\label{cosphereBetti}
Let $M = S(T^*(Q))$,
where $Q$ is a compact, connected
$n$-dimensional manifold. Then  
\begin{enumerate}
\item [(i)] if $Q$ is orientable, 
then $\beta_{i}(M) = \beta_{n-i}(M)$ for all 
$i \in \{2,\dots,n-2\}$; 
\item [(ii)] if $Q$ is orientable and $n > 2$, then
$$
\beta_{n-1}(M) =
\begin{cases}   
\beta_1(M) + 1  & \text{if \  
$\sum_{i=0}^{n-2} (-1)^i\beta_i(M) + (-1)^{n-1} \beta_1(M) + (-1)^n = 0$,} \\
\beta_1(M)  & \text{otherwise;}
\end{cases}
$$
\item [(iii)] if $Q$ is orientable and $n=2$, 
then $\beta_{1}(M) \neq 1$;  
and
\item [(iv)] if  $Q$ is not orientable, 
then $H^n(M;\Z)$  
is not torsion-free.
\end{enumerate} 
\end{theorem}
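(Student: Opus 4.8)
The plan is to reduce all four statements to the Gysin sequence of Lemma~\ref{Gysin}, combined with Poincar\'e duality on $Q$ and the classical identification of the Euler class with the Euler characteristic. Here $M$ is the sphere bundle $S^{n-1}\to M \overset{p}\to Q$, and I will use throughout that when $Q$ is oriented the Euler class $e\in H^n(Q;\Z)$ of Lemma~\ref{Gysin} satisfies $\langle e,[Q]\rangle = \pm\,\chi(Q)$: indeed the metric isomorphism $TQ\cong T^*Q$ identifies $M$ with the unit tangent bundle, and the Euler number of $TQ$ is $\chi(Q)$ by Poincar\'e--Hopf. In particular $e=0$ if and only if $\chi(Q)=0$. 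Statement (i) is then immediate: if $Q$ is orientable and $2\le i\le n-2$, then both $i$ and $n-i$ lie in $\{2,\dots,n-2\}$, hence are $<n-1$, so Lemma~\ref{Gysin}(a) gives $\beta_i(M)=\beta_i(Q)$ and $\beta_{n-i}(M)=\beta_{n-i}(Q)$; Poincar\'e duality on the closed oriented manifold $Q$ supplies $\beta_i(Q)=\beta_{n-i}(Q)$, and concatenating the three equalities yields the symmetry.

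The heart of the argument is (ii), and the first step is to recognize that the displayed alternating sum is exactly $\chi(Q)$. Starting from $\chi(Q)=\sum_{i=0}^{n}(-1)^i\beta_i(Q)$ and isolating the top two terms, I would use Lemma~\ref{Gysin}(a) to replace $\beta_i(Q)$ by $\beta_i(M)$ for all $i\le n-2$, Poincar\'e duality together with Lemma~\ref{Gysin}(a) to write $\beta_{n-1}(Q)=\beta_1(Q)=\beta_1(M)$ (valid since $n>2$), and $\beta_n(Q)=1$. This rewrites $\chi(Q)$ as $\sum_{i=0}^{n-2}(-1)^i\beta_i(M)+(-1)^{n-1}\beta_1(M)+(-1)^n$, so the condition in (ii) is precisely $\chi(Q)=0$, equivalently $e=0$. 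Finally, Lemma~\ref{Gysin}(b) gives $\beta_{n-1}(M)=\beta_{n-1}(Q)+1$ when $e=0$ and $\beta_{n-1}(M)=\beta_{n-1}(Q)$ otherwise; since $\beta_{n-1}(Q)=\beta_1(M)$, this is exactly the stated dichotomy. I expect the main obstacle to be nothing deep but rather this bookkeeping: keeping straight which Betti numbers of $M$ and $Q$ coincide and correctly matching the term $(-1)^n\beta_n(Q)=(-1)^n$ against the isolated constant.

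The two remaining cases are more self-contained. For (iii) take $n=2$, so $M$ is a circle bundle over a closed oriented surface $Q$ of genus $g$ with $\chi(Q)=2-2g$. The ($n=2$ instance of the) Gysin sequence \eqref{GysinQ} gives $\beta_1(M)=\beta_1(Q)+\dim\ker\!\big(\cup e\colon H^0(Q;\R)\to H^2(Q;\R)\big)$, where $\cup e$ is multiplication by $\chi(Q)$; hence $\beta_1(M)=2g$ when $g\ne 1$ and $\beta_1(M)=2g+1=3$ when $g=1$, and in neither case is $\beta_1(M)=1$. For (iv), suppose $Q$ is non-orientable, so that the orientation local system has no nonzero monodromy-invariant section, giving $H^0(Q;\mathcal{Z})=0$, while $H^n(Q;\Z)=\Z_2$. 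Substituting $i=n$ into the twisted-coefficient Gysin sequence \eqref{twistedGysinQ} from the proof of Lemma~\ref{Gysin} and using $H^{n+1}(Q;\Z)=0$ produces a short exact sequence $0\to H^n(Q;\Z)\to H^n(M;\Z)\to H^1(Q;\mathcal{Z})\to 0$. Thus $\Z_2=H^n(Q;\Z)$ injects into $H^n(M;\Z)$, exhibiting an element of order two and showing that $H^n(M;\Z)$ is not torsion-free.
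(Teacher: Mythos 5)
Your proposal is correct and follows essentially the same route as the paper: Lemma~\ref{Gysin} plus Poincar\'e duality for (i) and (ii), the identification of the displayed alternating sum with $\chi(Q)$ (equivalently, vanishing of the Euler class) for the dichotomy in (ii), the genus case analysis for (iii), and the twisted Gysin sequence with $H^0(Q;\mathcal{Z})=0$, $H^n(Q;\Z)=\Z_2$ for (iv). Your only additions — the Poincar\'e--Hopf justification that $e=0$ iff $\chi(Q)=0$, and spelling out the short exact sequence in (iv) — are details the paper asserts without proof, and both are correct.
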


\begin{proof}
Assume first that $Q$ is orientable;
fix an orientation on $Q$.
Since $Q$ is compact, the 
Euler class of the bundle $S^{n-1} \to M \overset{p}\to Q$ is zero 
exactly if  the Euler characteristic $\chi(Q) = \sum_{i=0}^n (-1)^i\beta_i(Q)$ is zero.
Therefore, Lemma~\ref{Gysin} implies that
\begin{gather}\label{QMi}
\beta_i(M) = \beta_i(Q) \quad 
\mbox{for all}  \ i < n-1, \quad \mbox{and} \\
\label{QMn-1}
\beta_{n-1}(M) =
\begin{cases}   
\beta_{n-1}(Q) + 1  & \text{if 
$\chi(Q) = 0$,} \\
\beta_{n-1}(Q)  & \text{otherwise.}
\end{cases}
\end{gather}
Moreover, by Poincar\'e duality
\begin{equation}\label{PD}
\beta_i(Q) = \beta_{n-i}(Q) \quad 
\mbox{for all} \ i.
\end{equation}
Equations  \eqref{QMi} and \eqref{PD} imply that claim (i) 
holds,  and also that $\beta_{n-1}(Q) = \beta_1(M)$ if $n > 2$.
Hence, since $\beta_n(Q) = 1$, claim (ii) follows
from \eqref{QMn-1}.
Finally, if $n = 2$, then  $Q$ is a compact orientable
surface of genus $g$; 
in particular 
$H^1(Q;\R) = \R^{2g}$.
If $g \neq 1$, then $\chi(Q) \neq 0$ and so \eqref{QMn-1} implies that
$\beta_1(M) = \beta_1(Q) = 2g \neq 1$.  On the other hand,
if $g = 1$, then $\chi(Q) = 0$ and so \eqref{QMn-1} implies that
$\beta_1(M) = \beta_1(Q) + 1 = 3 \neq 1$.
This completes the proof of claim (iii).

Now assume that $Q$ is not 
orientable; the Gysin sequence with twisted integer coefficients is exact:
\begin{displaymath}
\cdots  \to H^{0}(Q;\mathcal{Z}) \to H^n(Q;\Z) \to H^n(M;\Z) 
\to H^{1}(Q;\mathcal{Z}) \to \cdots.
\end{displaymath}
Claim (iv) follows immediately from the
facts that  $H^n(Q;\Z) = \Z/2\Z$ and $H^0(Q;\mathcal{Z})=0$.
\end{proof}

\section{Cosphere bundles which are contact toric manifolds}

In this section, we compare 
our earlier calculations of 
the cohomology
of cosphere bundles and the cohomology
of the contact toric manifolds listed
Theorem \ref{contactthm}. We use this
to show that $\T^n \times S^{n-1}$
is the only one of these manifolds that 
can  be given the structure of the cosphere 
bundle of an odd-dimensional, oriented manifold.
Moreover, we show that if the cosphere bundle of a manifold $Q$ is
$\T^n \times S^{n-1}$, then $Q$ is homeomorphic to $\T^n$.

\begin{prop}\label{productcosphere}
If the cosphere bundle $M = S(T^*Q)$ of a manifold $Q$
is homeomorphic to $\T^k \times S^{2n-k-1}$  
for some integers $k$ and $n$ with $0<k\leq n$,  
then  $k=n$. 
\end{prop}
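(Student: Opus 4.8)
```latex
The plan is to compare the Betti numbers of $M = S(T^*Q)$, viewed as the
cosphere bundle of an orientable manifold $Q$, against the explicit Betti
numbers of $\T^k \times S^{2n-k-1}$ computed in
Proposition~\ref{productBetti}. The key observation is that
$M \cong \T^k \times S^{2n-k-1}$ is orientable and has infinite fundamental
group $\Z^k$ when $0 < k \leq n$; since the homotopy long exact sequence of
$S^{n-1} \to M \to Q$ gives $\pi_1(M) = \pi_1(Q)$ (using $n-1 \geq 1$), the base
$Q$ must also have infinite $\pi_1$, hence be noncompact or else orientable in a
controlled way. More to the point, $Q$ is orientable: a nonorientable $Q$ is
excluded by part (iv) of Theorem~\ref{cosphereBetti}, since
$H^*(\T^k \times S^{2n-k-1};\Z)$ is torsion-free by
Proposition~\ref{productBetti}, whereas $H^n(M;\Z)$ would have $2$-torsion.
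So I may assume $Q$ is orientable and apply parts (i) and (ii) of
Theorem~\ref{cosphereBetti}.

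First I would record, from Proposition~\ref{productBetti}, that
$\beta_m(M) = \binom{k}{m} + \binom{k}{2n-m-1}$. In the relevant low-degree
range the second binomial coefficient vanishes (since $2n-m-1 > k$ when
$m$ is small relative to $n$), so $\beta_m(M) = \binom{k}{m}$ there; in
particular $\beta_1(M) = \binom{k}{1} = k$ and
$\beta_i(M) = \binom{k}{i}$ for $i$ up to roughly $n-1$. The symmetry in
part~(i) of Theorem~\ref{cosphereBetti}, namely $\beta_i(M) = \beta_{n-i}(M)$
for $2 \leq i \leq n-2$, then forces $\binom{k}{i} = \binom{k}{n-i}$ across
this range. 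For a single binomial row, the identity
$\binom{k}{i} = \binom{k}{n-i}$ holding for all such $i$ pins down the
relationship between $k$ and $n$: the row $\binom{k}{\bullet}$ is symmetric
about $k/2$, so matching it against symmetry about $n/2$ should force
$k = n$ unless $k$ is small. The suspected outcome is that the only way the
required symmetries and the parity condition in part~(ii) can all hold
simultaneously is $k = n$.

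The main obstacle I anticipate is handling the small and boundary cases
cleanly, where the range $\{2,\dots,n-2\}$ is short or empty and the naive
binomial symmetry argument gives little leverage. For instance when $k$ is
much smaller than $n$, many $\beta_i(M)$ vanish and the symmetry in part~(i)
is satisfied vacuously or trivially, so I cannot conclude $k = n$ from
part~(i) alone. Here I would bring in part~(ii) of
Theorem~\ref{cosphereBetti}, which relates $\beta_{n-1}(M)$ to $\beta_1(M)$
via an Euler-characteristic-type alternating sum. Plugging
$\beta_i(M) = \binom{k}{i} + \binom{k}{2n-i-1}$ into the alternating
condition $\sum_{i=0}^{n-2}(-1)^i\beta_i(M) + (-1)^{n-1}\beta_1(M)
+ (-1)^n = 0$, and comparing the resulting two cases for $\beta_{n-1}(M)$
against the actual value $\binom{k}{n-1} + \binom{k}{n}$, should yield a
numerical contradiction whenever $k < n$.

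My plan is therefore to split into two regimes: the case $n$ small (say
$n \leq 4$, or wherever the interval $\{2,\dots,n-2\}$ is too short), handled
by direct computation of the two Betti-number formulas and inspection; and
the generic case, where part~(i) gives $\binom{k}{i} = \binom{k}{n-i}$
forcing $k = n$, with part~(ii) mopping up the residual small-$k$
possibilities. The delicate bookkeeping is ensuring the second binomial
coefficient $\binom{k}{2n-m-1}$ is correctly accounted for near degree
$n-1$, since that is exactly where it becomes nonzero and where part~(ii)
operates; I expect that is where the argument must be most careful rather
than where it is genuinely hard.
```
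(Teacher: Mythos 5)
Your proposal is correct and in substance identical to the paper's proof: both rule out nonorientable $Q$ by combining Theorem~\ref{cosphereBetti}(iv) with the torsion-freeness in Proposition~\ref{productBetti}, and both obtain the contradiction for orientable $Q$ and $n>2$ from Theorem~\ref{cosphereBetti}(ii), which forces $\beta_{n-1}(M)\geq\beta_1(M)=k$ while the actual value is $\binom{k}{n-1}+\binom{k}{n}\leq 1$ when $k<n$ --- exactly the ``numerical contradiction'' you predict. Two points of calibration: the paper never uses part~(i) at all, and indeed your ``generic case'' via binomial symmetry cannot work as stated, since the symmetry $\beta_i(M)=\beta_{n-i}(M)$ on $\{2,\dots,n-2\}$ is satisfied by some products with $k<n$ (e.g.\ $\T^3\times S^4$, where $k=3$, $n=4$, and the only condition is $\beta_2=\beta_2$); so part~(ii) is not a mop-up but the entire argument for $n>2$. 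Also, your remaining small case is precisely $n=2$, $k=1$, i.e.\ $S^1\times S^2$ with $\beta_1=1$, and there ``direct computation of the two Betti-number formulas'' does not suffice --- part~(ii) is unavailable for $n=2$, and what is needed is part~(iii) of Theorem~\ref{cosphereBetti} (equivalently, the classification of orientable surfaces, which gives $\beta_1(S(T^*Q))\in\{2g\}_{g\neq 1}\cup\{3\}$, never $1$), which is how the paper disposes of it.
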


\begin{proof} 
Assume on the contrary that $0 < k < n$.
Proposition~\ref{productBetti} 
implies that
\begin{gather*}
\beta_{1}\big(\T^k \times S^{2n-k-1}\big) = k \quad \mbox{and} \\
\beta_{n-1}\big(\T^k \times S^{2n-k-1}\big) =
\left\{ \begin{array}{ll} 1 & \textrm{if } k=n-1, \\ 0 & \textrm{if } k < n-1. \end{array} \right.
\end{gather*}
In particular $\beta_{1}\big(S^1 \times S^2\big)=1$,  
and so  $S^1 \times S^2$ fails to be a cosphere bundle 
of an oriented manifold
by Theorem \ref{cosphereBetti}(iii). 
Therefore, we may assume that $n >2$.
Hence, 
Theorem  
\ref{cosphereBetti}(ii) 
implies that
if $\T^{k} \times S^{2n-k-1}$ is the cosphere bundle of an orientable manifold, 
then $\beta_{n-1}\big(\T^{k} \times S^{2n-k-1}\big) \geq \beta_{1}\big(\T^{k} \times S^{2n-k-1}\big)$. 
But this contradicts the equations above.
On the other hand, 
Proposition~\ref{productBetti} and Theorem~\ref{cosphereBetti}(iv) imply
that $T^k \times S^{2n-k-1}$ is
not the cosphere bundle of a manifold which is not orientable.
\end{proof}

\begin{prop}\label{Reebtypecosphere}
Assume that  $M$ is a principal $S^1$-bundle 
with curvature $\omega$ over a  symplectic toric orbifold  $(N,\omega)$,
{\em and} that $M$ is the cosphere bundle of a compact $n$-dimensional manifold $Q$
with $n > 3$. If $n$ is odd, then $Q$ is not orientable;
if $n$ is even, then  $\beta_{2i+1}(Q) = 0$ for all $i$.
\end{prop}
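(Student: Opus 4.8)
The plan is to pit the two cohomological constraints on $M$ against each other. From the hypothesis that $M$ is a principal $S^1$-bundle over the symplectic toric orbifold $(N,\omega)$, Proposition~\ref{ReebtypeBetti} controls the low-degree Betti numbers of $M$: since $\dim M = 2n-1$ forces $\dim N = 2n-2$ (so the ``$n$'' of that proposition agrees with ours), we have $\beta_i(M)=0$ for every odd $i<n$, and $\beta_i(M)=\beta_i(N)-\beta_{i-2}(N)$ for every even $i\le n$. From the hypothesis that $M=S(T^*Q)$, Lemma~\ref{Gysin} gives $\beta_i(M)=\beta_i(Q)$ for $i<n-1$, together with the two possibilities for $\beta_{n-1}(M)$ governed by the Euler class $e$. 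I would first record these facts, noting that $n\ge 5$ in the odd case and $n\ge 4$ in the even case, so that all index ranges below are nonempty.

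For $n$ even I would argue directly that every odd Betti number of $Q$ vanishes. The odd degrees of $Q$ lie in $\{1,3,\dots,n-1\}$. For odd $j<n-1$, Lemma~\ref{Gysin}(a) gives $\beta_j(Q)=\beta_j(M)$, which is $0$ by Proposition~\ref{ReebtypeBetti} since $j<n$ is odd. For the top odd degree $j=n-1$, Proposition~\ref{ReebtypeBetti} gives $\beta_{n-1}(M)=0$ (again $n-1<n$ is odd); feeding this into Lemma~\ref{Gysin}(b) forces $\beta_{n-1}(Q)=0$, because the only other alternative, namely ``$Q$ orientable with $e=0$'', would give the impossible equality $0=\beta_{n-1}(Q)+1$. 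Hence $\beta_{2i+1}(Q)=0$ for all $i$.

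For $n$ odd I would assume $Q$ is orientable and derive a contradiction. Since $\dim Q=n$ is odd, $\chi(Q)=0$, so the Euler class $e$ vanishes and Lemma~\ref{Gysin}(b) gives $\beta_{n-1}(M)=\beta_{n-1}(Q)+1$. Combining Lemma~\ref{Gysin}(a), Poincar\'e duality on $Q$, and the vanishing $\beta_j(M)=0$ for odd $j<n$ from Proposition~\ref{ReebtypeBetti}, I would show that $\beta_i(M)=0$ for all $1\le i\le n-2$ and that $\beta_{n-1}(M)=1$ (in fact $Q$ turns out to be a rational homology $n$-sphere). I then switch to the orbifold side: the even-degree formula of Proposition~\ref{ReebtypeBetti} turns $\beta_2(M)=0$ into $\beta_2(N)=\beta_0(N)=1$ and, running up the even degrees, turns $\beta_i(M)=0$ for even $2\le i\le n-3$ into $\beta_{n-3}(N)=\dots=\beta_0(N)=1$; finally $\beta_{n-1}(M)=1$ becomes $\beta_{n-1}(N)-\beta_{n-3}(N)=1$, i.e.\ $\beta_{n-1}(N)=2$. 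But $\beta_2(N)=1$ means $H^2(N;\R)$ is spanned by a single class $u$, and since Proposition~\ref{symptoricorbcoho} says $H^*(N;\R)$ is generated as a ring in degree $2$, every $H^{2k}(N;\R)$ is spanned by $u^k$; in particular $\beta_{n-1}(N)\le 1$ (recall $n-1$ is even). This contradicts $\beta_{n-1}(N)=2$, so $Q$ cannot be orientable.

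The main obstacle is the odd case, and in particular the passage from the Betti numbers of $M$ to those of $N$: one must check that the even degrees $2,4,\dots,n-1$ all fall in the range $i\le n$ where Proposition~\ref{ReebtypeBetti}'s clean formula applies, and then recognize that the single surviving degree-two class, through the Danilov ring-generation statement of Proposition~\ref{symptoricorbcoho}, caps all even Betti numbers of $N$ at $1$. The Hard Lefschetz input (Theorem~\ref{hardlef}) is already absorbed into Proposition~\ref{ReebtypeBetti}, so the one genuinely new idea is this degree-two generation bound; everything else is careful bookkeeping of parities and index ranges, which becomes routine once $n\ge 5$ is used.
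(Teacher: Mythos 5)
Your proposal is correct and follows essentially the same route as the paper: vanishing odd Betti numbers of $M$ from Proposition~\ref{ReebtypeBetti}, the Gysin computations for the cosphere bundle, the inductive determination $\beta_2(N)=1$ and $\beta_{n-1}(N)=2$, and the final contradiction with degree-two ring generation from Proposition~\ref{symptoricorbcoho}. The only differences are cosmetic --- you invoke Poincar\'e duality on $Q$ and $\chi(Q)=0$ directly where the paper routes the same facts through Theorem~\ref{cosphereBetti}(i)--(ii), and in the even case you handle $\beta_{n-1}(Q)$ uniformly via Lemma~\ref{Gysin}(b) instead of splitting into orientable and non-orientable cases.
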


\begin{proof}
We may assume without loss of generality that $Q$ is 
connected; in particular,
\begin{equation}\label{betti0}
\beta_0(M) = 1.
\end{equation}
Moreover, by
Proposition \ref{ReebtypeBetti}
\begin{equation}\label{oddbetti}
\beta_{i}(M)=0 \quad \mbox{for all } i<n \mbox{ odd}. 
\end{equation}

Assume first that $n$ is odd and $Q$ is orientable.
Then \eqref{oddbetti} and
Theorem \ref{cosphereBetti}(i) 
imply
that 
\begin{equation}\label{bettii}
\beta_i(M) = 0 \quad 
\mbox{for all} \ i \in \{1,\dots,n-2\}.
\end{equation}
Applying Theorem \ref{cosphereBetti}(ii)
to \eqref{betti0} and \eqref{bettii}, we see that 
\begin{equation}\label{bettin-1}
\beta_{n-1}(M) = 1.
\end{equation}
By induction and Proposition \ref{ReebtypeBetti},
\begin{equation}\label{induct}
\beta_i(N) = \sum_{j=0}^i \beta_j(M) \quad 
\mbox{ for $i <  n + 1$ even}.
\end{equation}
Since $n - 1 > 2$, \eqref{betti0}, \eqref{bettii},
\eqref{bettin-1}, and \eqref{induct} together
imply that $\beta_2(N) = 1$ and $\beta_{n-1}(N) = 2$.
But this is impossible,  since  as a ring
$H^2(N;\R)$ generates $H^*(N;\R)$ by Theorem 
\ref{symptoricorbcoho}.

So assume instead that $n$ is even.
By Lemma~\ref{Gysin}(a),  \eqref{oddbetti} implies that  $\beta_i(Q) = 0$
for all $i < n-1$ odd.
If $Q$ is orientable,  then  
$\beta_{n-1}(Q) = \beta_1(Q)$ by Poincar\'e duality.
Moreover, $\beta_1(Q) = 0$ since $1 < n-1$.
On the other hand, if $Q$ is not orientable, then
$\beta_{n-1}(Q) = 0$ by \eqref{oddbetti} and Lemma~\ref{Gysin}(b).
\end{proof}

\begin{prop}\label{Qisatorus}
If the cosphere bundle $M = S(T^*Q)$ of a manifold $Q$
is homeomorphic to $\T^n \times S^{n-1}$, then $Q$ is homeomorphic to $\T^n$.
\end{prop}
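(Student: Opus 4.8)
The plan is to show that $Q$ is homotopy equivalent to $\T^n$ and then to invoke the topological rigidity of the torus — every closed manifold homotopy equivalent to $\T^n$ is homeomorphic to $\T^n$ (see \cite{ks} and the references therein) — to upgrade this to a homeomorphism. Note first that since $M = \T^n \times S^{n-1}$ is compact and connected and $p \colon M \to Q$ is a bundle projection with compact fibre, $Q$ is automatically a compact connected $n$-manifold; I treat $n \geq 3$ as the main case. I would then record two structural facts. Since $H^*(\T^n \times S^{n-1};\Z)$ is torsion-free by Proposition~\ref{productBetti}, Theorem~\ref{cosphereBetti}(iv) forces $Q$ to be orientable. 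And from the homotopy long exact sequence of $S^{n-1} \to M \to Q$, the vanishing of $\pi_0(S^{n-1})$ and $\pi_1(S^{n-1})$ (here $n-1 \geq 2$) gives $\pi_1(Q) \cong \pi_1(M) = \Z^n$.

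Next I would reduce everything to the asphericity of $Q$. Let $\tQ \to Q$ be the universal cover, with deck group $\Z^n$; pulling back the sphere bundle $M \to Q$ realizes $S(T^*\tQ) \to \tQ$, and $S(T^*\tQ)$ is a covering of $M$ with the same deck group. Since $\tQ$ and the fibre $S^{n-1}$ are both simply connected, $S(T^*\tQ)$ is simply connected, hence \emph{is} the universal cover of $M$; therefore $S(T^*\tQ)$ is homeomorphic to $\R^n \times S^{n-1}$, which is homotopy equivalent to $S^{n-1}$. Thus if I can prove $\tQ$ is contractible, then $Q$ is a $K(\Z^n,1)$, hence homotopy equivalent to $\T^n$, and rigidity finishes the proof.

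To prove $\tQ$ is contractible I would run the homology Gysin sequence of $S^{n-1} \to S(T^*\tQ) \to \tQ$. Its Euler class lies in $H^n(\tQ;\Z)=0$, since $\tQ$ is a connected, orientable, non-compact $n$-manifold, so the sequence breaks into short exact sequences $0 \to H_{j-n+1}(\tQ) \to H_j\big(S(T^*\tQ)\big) \to H_j(\tQ) \to 0$ relating $H_*(\tQ)$ to $H_*(S^{n-1})$. Reading these off shows immediately that $\widetilde H_j(\tQ)=0$ for all $j \ne n-1$, while in degree $n-1$ one is left with a cokernel $\Z/d$, where $d$ records the integer multiple by which the fibre class $[S^{n-1}]$ generates $H_{n-1}\big(S(T^*\tQ)\big) \cong \Z$.

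The main obstacle is precisely to show $d = \pm 1$, i.e. that the fibre of the cosphere bundle represents a \emph{generator} of the homology of the universal cover; this is the one point that is not formal. I would resolve it by comparison with the base $Q$. By Lemma~\ref{Gysin}(a) the Betti numbers of $Q$ below degree $n-1$ agree with those of $\T^n$, and Poincar\'e duality supplies the rest, so $\chi(Q) = \sum_i (-1)^i \binom{n}{i} = 0$; hence the Euler class of $M \to Q$ vanishes (as in the proof of Theorem~\ref{cosphereBetti}), and the homology Gysin sequence over $Q$ yields $0 \to H_0(Q) = \Z \to H_{n-1}(M) \to H_{n-1}(Q) \to 0$ in which the fibre class generates the image of $\Z$. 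Because $H_{n-1}(Q) \cong H^1(Q;\Z) = \Z^n$ is free, this sequence splits, so the fibre class is primitive in $H_{n-1}(M)$. The covering $S(T^*\tQ) \to M$ restricts to a diffeomorphism on each fibre and so carries fibre class to fibre class; primitivity downstairs then forces $d = \pm 1$. Therefore $\tQ$ is acyclic and simply connected, hence contractible by Hurewicz and Whitehead, and $Q \simeq \T^n$ as desired. Finally, the degenerate cases are easy: for $n = 2$ one has $M \cong \T^3$, and $\pi_1(Q)$ is a quotient of $\Z^3$ by a cyclic subgroup, hence infinite and abelian, so the classification of surfaces gives $Q \cong \T^2$ directly, while $n = 1$ is trivial.
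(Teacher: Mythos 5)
Your proposal is correct, and its skeleton is the same as the paper's: establish orientability of $Q$ via Proposition~\ref{productBetti} and Theorem~\ref{cosphereBetti}(iv), compute $\pi_1(Q)=\Z^n$, pass to the universal cover $\tQ$, identify $S\big(T^*\tQ\big)$ with the universal cover $\R^n\times S^{n-1}$ of $M$, use a Gysin sequence to show $\tQ$ is acyclic and hence contractible, conclude that $Q$ is a $K(\Z^n,1)$ homotopy equivalent to $\T^n$, and finish with topological rigidity of the torus. The one place you genuinely diverge is the degree-$(n-1)$ step, and there you made work for yourself: your ``main obstacle'' $d=\pm 1$ is in fact formal. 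With vanishing Euler class, your own short exact sequences, read at $j=2n-2$, give an injection $H_{n-1}\big(\tQ\big) \hookrightarrow H_{2n-2}\big(S\big(T^*\tQ\big)\big)=0$ (since $S\big(T^*\tQ\big)$ is homotopy equivalent to $S^{n-1}$ and $2n-2>n-1$), so $H_{n-1}\big(\tQ\big)=0$ with no divisibility analysis; equivalently, the paper just applies Lemma~\ref{Gysin}(b) to get $\Z = H^{n-1}\big(S\big(T^*\tQ\big);\Z\big) \cong H^{n-1}\big(\tQ;\Z\big)\oplus\Z$, forcing $H^{n-1}\big(\tQ;\Z\big)=0$. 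Your primitivity detour (fibre class primitive in $H_{n-1}(M)$ because $\chi(Q)=0$ makes the Euler class of $M\to Q$ vanish and $H_{n-1}(Q;\Z)\cong\Z^n$ is free, then pushed along the covering, which is fibrewise a diffeomorphism) is correct, so nothing is broken --- it is simply unnecessary. Two smaller points: your rigidity citation is off, since \cite{ks} concerns fake tori, i.e., the \emph{failure} of smooth rigidity; the homeomorphism statement is Hsiang--Wall \cite{hw} for $n\geq 5$ and Freedman \cite{f} for $n=4$, while for $n=3$ it is itself a nontrivial theorem requiring Perelman (to get primeness/irreducibility) plus Waldhausen, which the paper spells out and your blanket appeal quietly absorbs. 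Finally, your $n=2$ argument ($\pi_1(Q)$ an infinite abelian quotient of $\Z^3$, hence $Q\cong\T^2$) is a fine alternative to the paper's Betti-number argument.
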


\begin{proof}
Note first that 
Proposition~\ref{productBetti} and Theorem~\ref{cosphereBetti}(iv)  imply
that $Q$ is  orientable;
fix an orientation on $Q$.
If $n = 2$, the claim follows from the fact that the torus is the only
closed surface
with  $\beta_1(S(T^*Q)) = \beta_1(T^2 \times S^1) = 3.$
So assume that $n > 2$.
If $\tQ$ is the universal cover of $Q$, then 
$S\big(T^*\tQ\big)$ is 
homeomorphic
to $\R^n \times S^{n-1}$. 
In particular,
\begin{equation}\label{betticov}
H^i \big(S\big(T^*\tQ\big);\Z\big) =
\begin{cases}   
\Z & \text{if $i = 0$ or $n-1$,} \\
0  & \text{otherwise.}
\end{cases}
\end{equation}

Since $S\big(T^*\tQ\big)$ is not compact, $\tQ$ is not compact.
Therefore, $H^n\big(\tQ;\Z\big) = 0$; {\em a fortiori},
the Euler class of the bundle $S^{n-1} \to S\big(T^*\tQ\big)  \overset{p}\to \tQ$ vanishes.
By Lemma~\ref{Gysin} and \eqref{betticov}, this implies that
$H^i\big(\tQ;\Z\big) = 0$ for all  $i > 0$.
Since $\tQ$ is simply connected,
by the Hurewicz Theorem this implies that
$\pi_i\big(\tQ\big) = 0$ for all $ i > 0.$
By the homotopy long exact sequence for
the covering $\tQ \to Q$,
this implies that 
$$\pi_i (Q) = 0\quad 
\mbox{for all} \  i > 1.$$
On the other hand,
by the homotopy long exact 
sequence for the bundle $S^{n-1} \to S(T^*Q) \to Q$, we have 
$$\pi_1(Q)= \pi_1( S (T^*Q)) = \pi_1(\T^n \times S^{n-1})=\Z^n.$$ 
Thus, $Q$
is an Eilenberg-MacLane space of type 
$K(\Z^n,1)$. 
Since Eilenberg-MacLane spaces are 
unique up to weak homotopy equivalence, it follows that $Q$  
and $\T^n$ are weakly homotopy equivalent. 
Since $Q$ is a manifold, and hence a CW complex, this
implies that $Q$  and $\T^n$ are  homotopy equivalent 
by Whitehead's theorem.

Moreover, in \cite{hw}, Hsiang and Wall 
prove that if a closed $n$-manifold with $n\geq 5$ is homotopy equivalent
to the $n$-torus, it is homeomorphic to the $n$-torus. 
The same conclusion holds in dimension four and follows from M.H. Freedman's 
celebrated work in \cite{f}. 
Finally, assume that $n = 3$.
Since $Q$ is a closed manifold and $\pi_1(Q) = \Z^3$ cannot be
written as a non-trivial free product, Perelman's proof of the
Poincar\'e conjecture implies that $Q$ is prime \cite{p1,p2,mt1}.
Since $\pi_1(Q) \neq \Z$, this implies that $Q$ is irreducible.
Moreover, since $H_1(Q,\R) \neq 0$, $Q$
contains an orientable, incompressible surface.
Hence, since $Q$ is oriented, Waldhausen's theorem
implies that $Q$ is homeomorphic to $T^3$
\cite{h}.
\end{proof}

\begin{prop}\label{Qislens}
If $M$ is the cosphere bundle of a compact $3$-manifold $Q$
and $\pi_1(M) = \Z_\ell$ for some $\ell \geq 1$,
then $Q$ is diffeomorphic  
to the lens space $S^3/\Z_\ell$.
\end{prop}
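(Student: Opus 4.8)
The plan is to first pin down the fundamental group of $Q$, and then invoke the resolution of the elliptization conjecture to recognize $Q$ as a spherical space form. First I would run the homotopy long exact sequence of the cosphere bundle $S^2 \to M \overset{p}\to Q$, exactly as in the proof of Proposition~\ref{Qisatorus}. Since the fiber $S^2$ is connected and simply connected, the relevant segment reads $0 = \pi_1(S^2) \to \pi_1(M) \to \pi_1(Q) \to \pi_0(S^2) = 0$, so $p$ induces an isomorphism $\pi_1(M) \cong \pi_1(Q)$. By hypothesis this gives $\pi_1(Q) \cong \Z_\ell$, a finite cyclic group.

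Next, I would use the fact that $Q$ is a closed, connected $3$-manifold with finite fundamental group. Here I would appeal to Perelman's elliptization theorem, the spherical space form case of the geometrization program \cite{p1,p2,mt1}: every closed $3$-manifold with finite fundamental group is diffeomorphic to a spherical space form $S^3/\Gamma$, where $\Gamma \subset SO(4)$ is a finite group acting freely and linearly on $S^3$ and $\Gamma \cong \pi_1(Q) \cong \Z_\ell$. In particular the universal cover of $Q$ is $S^3$, and $Q$ is orientable.

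Finally, since $\Gamma \cong \Z_\ell$ is cyclic, I would record that a free linear action of $\Z_\ell$ on $S^3 \subset \C^2$ is, after a unitary change of coordinates and an automorphism of $\Z_\ell$, of the standard rotational form $(z_1,z_2) \mapsto (\zeta z_1, \zeta^q z_2)$ with $\zeta = e^{2\pi \sqrt{-1}/\ell}$ and $\gcd(q,\ell) = 1$. Its quotient is by definition a lens space, which identifies $Q$ with $S^3/\Z_\ell$ and completes the argument. (The diffeomorphism type is a lens space $L(\ell,q)$; the notation $S^3/\Z_\ell$ should be read in this sense, matching the usage in Corollary~\ref{maincor}.)

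The main obstacle is that this recognition of $Q$ is not elementary: it rests squarely on the full strength of Perelman's resolution of the elliptization conjecture, and there is no substitute in this generality for $\ell > 1$ (the case $\ell = 1$ being the Poincar\'e conjecture). A secondary point deserving a remark is orientability — a priori one must exclude non-orientable quotients — but this is automatic once the elliptization theorem realizes $Q$ as a spherical space form; equivalently, a Lefschetz fixed point computation on $S^3$ shows that any free action of a finite group is orientation-preserving, so no non-orientable quotient arises.
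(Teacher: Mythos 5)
Your proposal is correct and follows essentially the same route as the paper: the homotopy long exact sequence of the fibration $S^{2} \to S(T^*Q) \to Q$ gives $\pi_1(Q) \cong \Z_\ell$, and Perelman's resolution of Thurston's elliptization conjecture then identifies $Q$ as a spherical space form with cyclic fundamental group, i.e.\ a lens space. The only differences are in which sub-steps get elaborated: you spell out the classification of free linear $\Z_\ell$-actions on $S^{3}$ (and the orientability point), which the paper treats as immediate, while the paper instead sketches how elliptization follows from geometrization (no incompressible tori or Klein bottles, hence a locally homogeneous metric modeled on $S^{3}$), which you handle by citation.
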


\begin{proof}
By the long exact sequence for the fibration $S^{2} \to S(T^*Q) \to Q$, 
the assumptions imply
that $\pi_1(Q) = \Z_\ell$.
Hence, the claim follows trivially from Thurston's elliptizaton conjecture,
which states that every closed three-manifold with finite fundamental group
is 
quotient of $S^3$ by a finite subgroup of $SO(4)$ acting freely.
The elliptization conjecture is itself an easy consequence  
of
Thurston's geometrization conjecture, proved by Perelman \cite{p1,p2,mt2}.
To see this, let $M$ be a prime three-manifold with finite
fundamental group.  Since $\pi_1(M)$ is finite, $M$ does not
admit any incompressible tori or Klein bottles. Therefore, the geometrization
conjecture implies that $M$ admits a locally homogeneous metric.
Finally, $S^3$ is the only compact model geometry.
\end{proof}


\begin{thebibliography}{10}

\bibitem{Ab}
Mohammed Abouzaid.
\newblock Framed bordism and lagrangian embeddings of exotic spheres.
\newblock {\em arXiv:0812.4781v1 [math.SG]}, 2008.

\bibitem{bm}
A.~Banyaga and P.~Molino.
\newblock Complete integrability in contact geometry.
\newblock {\em Penn State preprint}, PM 197, 1996.

\bibitem{bg}
Charles~P. Boyer and Krzysztof Galicki.
\newblock A note on toric contact geometry.
\newblock {\em J. Geom. Phys.}, 35(4):288--298, 2000.

\bibitem{cdv}
Yves Colin~de Verdi{\`e}re.
\newblock Spectre conjoint d'op\'erateurs pseudo-diff\'erentiels qui commutent.
  {II}. {L}e cas int\'egrable.
\newblock {\em Math. Z.}, 171(1):51--73, 1980.

\bibitem{d}
V.~I. Danilov.
\newblock The geometry of toric varieties.
\newblock {\em Uspekhi Mat. Nauk}, 33(2(200)):85--134, 247, 1978.

\bibitem{f}
Michael~Hartley Freedman.
\newblock The topology of four-dimensional manifolds.
\newblock {\em J. Differential Geom.}, 17(3):357--453, 1982.

\bibitem{h}
John Hempel.
\newblock {\em {$3$}-{M}anifolds}.
\newblock Princeton University Press, Princeton, N. J., 1976.
\newblock Ann. of Math. Studies, No. 86.

\bibitem{hw}
W.-c. Hsiang and C.~T.~C. Wall.
\newblock On homotopy tori. {II}.
\newblock {\em Bull. London Math. Soc.}, 1:341--342, 1969.

\bibitem{ks}
M.~Kreck and J.~A. Schafer.
\newblock Classification and stable classification of manifolds: some examples.
\newblock {\em Comment. Math. Helv.}, 59(1):12--38, 1984.

\bibitem{cl}
Christopher~R. Lee.
\newblock Obstructions to toric integrable geodesic flows in dimension 3.
\newblock {\em Ann. Global Anal. Geom.}, 30(4):397--406, 2006.

\bibitem{l3}
Eugene Lerman.
\newblock A convexity theorem for torus actions on contact manifolds.
\newblock {\em Illinois J. Math.}, 46(1):171--184, 2002.

\bibitem{l2}
Eugene Lerman.
\newblock Contact toric manifolds.
\newblock {\em J. Symplectic Geom.}, 1(4):785--828, 2003.

\bibitem{l1}
Eugene Lerman.
\newblock Homotopy groups of {$K$}-contact toric manifolds.
\newblock {\em Trans. Amer. Math. Soc.}, 356(10):4075--4083 (electronic), 2004.

\bibitem{ls}
Eugene Lerman and Nadya Shirokova.
\newblock Completely integrable torus actions on symplectic cones.
\newblock {\em Math. Res. Lett.}, 9(1):105--115, 2002.

\bibitem{mmo}
Christopher McCord, Kenneth~R. Meyer, and Daniel Offin.
\newblock Are {H}amiltonian flows geodesic flows?
\newblock {\em Trans. Amer. Math. Soc.}, 355(3):1237--1250 (electronic), 2003.

\bibitem{mt2}
John Morgan and Gang Tian.
\newblock {\em Ricci flow and the {P}oincar\'e conjecture}, volume~3 of {\em
  Clay Mathematics Monographs}.
\newblock American Mathematical Society, Providence, RI, 2007.

\bibitem{mt1}
John Morgan and Gang Tian.
\newblock Completion of the proof of the geometrization conjecture.
\newblock {\em arXiv:0809.4040v1 [math.DG]}, 2008.

\bibitem{p1}
Grisha Perelman.
\newblock The entropy formula for the ricci flow and its geometric
  applications.
\newblock {\em arXiv:math/0211159v1 [math.DG]}, 2002.

\bibitem{p2}
Grisha Perelman.
\newblock Ricci flow with surgery on three-manifolds.
\newblock {\em arXiv:math/0303109v1 [math.DG]}, 2003.

\bibitem{tz}
John~A. Toth and Steve Zelditch.
\newblock Riemannian manifolds with uniformly bounded eigenfunctions.
\newblock {\em Duke Math. J.}, 111(1):97--132, 2002.

\end{thebibliography}

\end{document}